\newcommand{\avg}[1]{\left\{\hspace*{-1pt}\left\{#1\right\}\hspace*{-1pt}\right\}}
\newcommand{\jump}[1]{\ensuremath{\left\llbracket #1 \right\rrbracket}}
\newcommand\statevec[1]{\mathbf #1}					
\newcommand\statemat[1]{\underline{#1}}
\newcommand\polydeg{p}	
\newcommand\statedim{n} 
\newcommand\stateset{\mathbb{R}^{\statedim}}
\newcommand\ncpath[3]{\Phi(#1; #2, #3)}
\newtheorem{theorem}{Theorem}
\newtheorem{lemma}{Lemma}
\newdefinition{remark}{Remark}
\newdefinition{definition}{Definition}
\newproof{proof}{Proof}
\journal{Journal of Computational Physics}
\newcommand{\orcid}[1]{\href{https://orcid.org/#1}{\includegraphics[width=10pt]{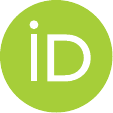}}}
\begin{document}

\begin{frontmatter}

\title{A new class of entropy stable fluctuations for the discontinuous Galerkin method with application to the Saint-Venant-Exner model}

\author[liu]{Patrick Ersing\corref{cor1}\orcid{0009-0005-3804-5380}}
\ead{patrick.ersing@liu.se}
\cortext[cor1]{Corresponding author}

\author[liu]{Andrew R. Winters\orcid{0000-0002-5902-1522}}

\affiliation[liu]{organization={Linköping University, Department of Mathematics},
            city={Linköping},
            postcode={58183}, 
            country={Sweden}}

\begin{abstract}
In this work we consider entropy stable discontinuous Galerkin methods applied to nonconservative hyperbolic systems.
We introduce a new class of entropy conservative fluctuations that allow us to construct entropy conservative schemes without any system-specific derivations.
We demonstrate that a loss of entropy symmetrization for nonconservative systems restricts the design of entropy stable fluctuations and propose a novel blending procedure to construct entropy stable dissipation terms from general numerical viscosity matrices.
The resulting methodology is applied to develop a high-order, entropy stable, and well-balanced approximation for the Saint-Venant-Exner system.
Numerical tests are presented to verify the theoretical findings and demonstrate the performance and robustness of the proposed scheme.
\end{abstract}

\begin{keyword}
Nonconservative hyperbolic system, Saint-Venant-Exner system, Discontinous Galerkin method, Entropy stable scheme, Path-conservative scheme
\end{keyword}

\end{frontmatter}

\section{Introduction} 
The discontinuous Galerkin method (DG) has become a common framework for the numerical approximation of hyperbolic conservation laws.
The method enjoys favorable dissipation and dispersion properties, geometric flexibility, and the possibility to create high-order approximations on an element-local stencil which provide good scalability.
However, the high-order nature responsible for many of these beneficial properties also makes the method prone to stability issues \cite{gassner2021novel}.
This becomes especially problematic in the presence of discontinuous and under-resolved simulations, where additional stabilization is often necessary to avoid spurious oscillations.
In the context of DG methods for nonlinear problems, the concept of entropy stable (ES) schemes \cite{tadmor2003entropy, merriam1989entropy}, where the solution is bounded by a convex mathematical entropy function, has recently received considerable attention.
These methods are build upon the idea of entropy conservative numerical fluxes introduced by Tadmor~\cite{tadmor1987numerical, tadmor2003entropy} for low-order finite volume (FV) schemes.
Carpenter, Fisher and collaborators \cite{fisher2013high, carpenter2014entropy} extended these entropy stable schemes to high-order finite difference methods using diagonal norm summation-by-parts (SBP) operators.
Subsequently, Gassner~\cite{gassner2013skew} applied this approach to nodal discontinuous Galerkin spectral element methods (DGSEM) with Legendre-Gauss-Lobatto (LGL) quadrature, as these methods satisfy the diagonal norm SBP property.
Since then, ES-DGSEM formulations have been successfully applied to a wide variety of problems, see e.g. \cite{ferrer2023, wintermeyer2017entropy, chan2022entropy, bohm2020entropy}.

While the application of entropy stable DGSEM has been actively researched for hyperbolic conservation laws, considerably less work has been done on hyperbolic systems that include nonconservative products.
The presence of nonconservative products introduces additional challenges when discontinuities are present, as they cannot be defined in the classical sense of distributions.
To resolve this, following the theory of Dal Maso, LeFloch and Murat~\cite{dal1995definition}, nonconservative products are defined by prescribing a family of paths through state space across a discontinuity.
From this theory, Parés~\cite{pares2006numerical} established the framework of path-conservative FV schemes formulated in terms of fluctuations that incorporate the path definition into the numerical scheme.
In a similar way, nonconservative DG methods using the theory from \cite{dal1995definition} have been formulated in \cite{rhebergen2008discontinuos, franquet2012runge}.
Entropy stable formulations have been developed by Castro et al.~\cite{castro2013entropy}, by creating entropy conservative (EC) and ES fluctuations for the path-conservative FV schemes from \cite{pares2006numerical}.
Renac~\cite{renac2019entropy} extended the ES-DGSEM to nonconservative systems by introducing a special volume integral and high-order accurate EC fluctuations.
The method was further extended to balance laws in nonconservative form \cite{waruszewski2022entropy} and two-dimensional curvilinear meshes \cite{coquel2021entropy}.

Despite the recent progress in the development of entropy stable DG methods for nonconservative systems, several open issues remain. 
First, existing EC fluctuations typically assume a specific structure of the nonconservative product and it remains unclear when fluctuations that are both entropy and path-conservative exist.
As a consequence, formulating ES methods for new models may require substantial model-dependent derivations.
Second, for many applications the choice of dissipation term to construct ES fluctuations requires careful consideration to ensure structure-preserving properties of the scheme. 
Classical entropy symmetrization arguments due to Barth \cite{barth1999numerical} do not apply in the presence of nonconservative terms \cite{cordesse2019entropy}, which severely restricts the design of provably stable dissipation terms.

In this work, we address these issues by revisiting the nonconservative DGSEM formulation from \cite{renac2019entropy} and extend it with a more general class of EC fluctuations.
Using the structure of these new EC fluctuations, we provide a strategy that constructs EC fluctuations explicitly without requiring any model-specific derivations, and a second strategy, which provides more efficient fluctuations if a closed-form solution can be found.
We further introduce a novel blending procedure, that allows us to construct ES fluctuations from general matrix-valued dissipation terms.
Finally, we apply the resulting ES-DGSEM to the Saint-Venant-Exner (SVE) equations, which is a coupled system modeling the interaction of shallow water flow with sediment transport.
We consider the formulation derived by Fernández-Nieto et al.~\cite{fernandez2017formal}, which, due to the existence of a mathematical entropy function and the strong nonlinearity in the nonconservative product, makes for an ideal test case of the proposed strategies.

In Section~\ref{sec:continuous_entropy_analysis}, we discuss the continuous entropy analysis and symmetrization properties for nonconservative systems.
We then formulate the nonconservative DGSEM and introduce strategies to construct EC and ES fluctuations in Section~\ref{sec:numerical_method}.
The proposed methodology is then applied in Section~\ref{sec:SVE_system} to develop a high-order, ES, and well-balanced approximation for the SVE system.
Numerical results for the SVE system are presented in Section~\ref{sec:results} to validate the theoretical findings and demonstrate convergence, well-balancedness, and ES properties.
Finally, we conclude our findings in Section~\ref{sec:conclusion}.

\section{Entropy analysis}\label{sec:continuous_entropy_analysis}
	We consider nonlinear systems with nonconservative terms of the form
	\begin{equation}
		\statevec{u}_t + \statevec{f}(\statevec{u})_x + \statemat{B}(\statevec{u})\statevec{u}_x = 0,
		\label{eq:nonconservative_system}
	\end{equation}
	where $\statevec{f}(\statevec{u}) \in \mathbb{R}^{\statedim}$ denotes a conservative flux function,  $\statemat{B}(\statevec{u})\statevec{u}_x$ a nonconservative product with coefficient matrix $\statemat{B}(\statevec{u}) \in \mathbb{R}^{\statedim \times \statedim}$ and $\statevec{u}(x, t) \in \stateset$ is the vector of solution variables. Herein, we assume this nonconservative product contains only true nonconservative terms, which cannot be expressed as the gradient of a vector-valued function. 
	Systems such as \eqref{eq:nonconservative_system} can also be represented in quasilinear form
	\begin{equation}
		\statevec{u}_t + \statemat{A}(\statevec{u})\statevec{u}_x = 0,
		\label{eq:system_quasilinear_form}
	\end{equation}
	with the generalized Jacobian $\statemat{A} \coloneqq \statevec{f}_\statevec{u} + \statemat{B}$, composed of the flux Jacobian $\statevec{f}_{\statevec{u}}(\statevec{u}) \in \mathbb{R}^{n \times n}$ and the nonconservative coefficient matrix $\statemat{B}$. 
	
	In the following, we give a brief overview to the continuous entropy theory for nonlinear systems and then discuss some key differences that arise in the presence of nonconservative terms.
	A nonconservative system \eqref{eq:nonconservative_system} may admit an entropy pair consisting of a convex mathematical entropy function $S(\statevec{u}) \in \mathbb{R}$ and a corresponding entropy flux $q(\statevec{u}) \in \mathbb{R}$. 
	Such entropy pairs must satisfy the entropy flux compatibility condition
	\begin{equation}
		q_{\statevec{u}} = S_{\!\statevec{u}}^{\!T}(\statevec{f}_{\statevec{u}} + \statemat{B}) = \statevec{w}^T(\statevec{f}_{\statevec{u}} + \statemat{B}),
		\label{eq:entropy_flux_compatibility_relation}
	\end{equation}
	where $\statevec{w}(\statevec{u}) \coloneqq S_{\!\statevec{u}}(\statevec{u}) \in \mathbb{R}^n$ denotes a new set of entropy variables for the system. 
	For a strictly convex entropy function the Hessian $S_{\!\statevec{u}\statevec{u}} = \statevec{w}_\statevec{u}$ is symmetric positive definite and, thus, provides a one-to-one mapping between solution variables $\statevec{u}$ and entropy variables $\statevec{w}$.
	
	Systems equipped with an entropy pair satisfy an additional conservation law for the mathematical entropy. For smooth solutions, multiplying system \eqref{eq:nonconservative_system} from the left with entropy variables and integrating over a domain $\Omega$ yields an integral version of this auxiliary scalar entropy conservation law
	\begin{equation}
	 	\int_{\Omega} \statevec{w}^T(\statevec{u}_t + \statevec{f}_x + \statemat{B}\statevec{u}_x) \, \text{d}x
	 	= 	
	 	\int_{\Omega} S_{\!t} + q_x \, \text{d}x 
	 	= 
	 	0.
	 	\label{eq:entropy_conservation_law}
	\end{equation}
	In the presence of discontinuities physically relevant solutions of \eqref{eq:nonconservative_system} should instead satisfy an entropy inequality
	\begin{equation}
		\int_{\Omega} S_{\!t} + q_x \, \text{d}x \leq 0.
		\label{eq:entropy_inequality}
	\end{equation}
	
	For hyperbolic conservative systems, the existence of a mathematical entropy function is directly linked to symmetrization and skew-symmetric forms.
	In \cite{godunov1961interesting, mock1980systems} it was shown that a conservative hyperbolic system is symmetrized by a change of variables if and only if there exists an entropy pair for the system.
	These results establish that conservative hyperbolic systems ($\statemat{B} = \statemat{0}$) are symmetrized when written in terms of entropy variables
	\begin{equation}
		\statevec{u}_t + \statevec{f}_x = \statemat{H}\statevec{w}_t + \statevec{f}_{\statevec{u}} \statemat{H} \statevec{w}_x = 0,
		\label{eq:cons_system_symmetric_form}
	\end{equation}
	where we introduce the Hessian of the entropy function $\statemat{H}^{-1}(\statevec{u}) \coloneqq \statevec{w}_{\statevec{u}}$ and its inverse $\statemat{H}(\statevec{w}(\statevec{u})) \coloneqq \statevec{u}_{\statevec{w}}$, which provide the mapping between both variable sets.
	While the theorems from \cite{mock1980systems, godunov1961interesting} establish this symmetry connection for conservative systems, these results do not extend in the presence of nonconservative products.
	In fact it was demonstrated in \cite{cordesse2019entropy} that, while nonconservative systems may still satisfy an auxiliary entropy conservation law, the connection between entropy functions and symmetrization breaks down.
	
	To examine the symmetrization properties and its breakdown, we differentiate the compatibility condition \eqref{eq:entropy_flux_compatibility_relation} and then apply the chain rule to obtain 
	\begin{equation}
		\statemat{H}^{-1} (\statevec{f}_\statevec{u} + \statemat{B}) + \statevec{w}^T(\statevec{f}_{\statevec{u}\statevec{u}} + \statemat{B}_\statevec{u}) = q_{\statevec{u}\statevec{u}}.
	\end{equation}
	In this expression, we identify ${q}_{\statevec{u}\statevec{u}}$ to be symmetric as it is the Hessian of the entropy flux, while $\statevec{w}^T\statevec{f}_{\statevec{u}\statevec{u}}$ is symmetric, as it involves a linear combination of Hessians $\statevec{f}_{\statevec{u}\statevec{u}} \in \mathbb{R}^{n \times n \times n} = \left((f_1)_{\statevec{u}\statevec{u}}, ..., (f_n)_{\statevec{u}\statevec{u}}\right)$. 
	Grouping the known symmetric terms on the right, we find that
	\begin{equation}
		\statemat{H}^{-1}(\statevec{f}_{\statevec{u}} + \statemat{B}) + \statevec{w}^T(\statemat{B}_\statevec{u}) \quad \text{is symmetric}.
	\end{equation}
	For conservative systems ($\statemat{B}=\statemat{0}$), this yields the known result that $\statemat{H}^{-1}$ acts as left symmetrizer for the flux Jacobian $\statevec{f}_{\statevec{u}}$.
	With $\statemat{H}^{-1}$ symmetric positive definite, this is equivalent to $\statemat{H}$ being a right symmetrizer to $\statevec{f}_{\statevec{u}}$, establishing the symmetry of \eqref{eq:cons_system_symmetric_form}, see Lemma \ref{lemma:left_right_symmetrization} in~\ref{app:lemma4}.
	
	In contrast, for nonconservative systems ($\statemat{B} \neq \statemat{0}$) only the sum $\statemat{H}^{-1}\statevec{f}_{\statevec{u}} + \statevec{w}^TB_u$ is symmetric, hence neither $\statemat{H}^{-1}(\statevec{f}_{\statevec{u}} + B)$ nor $(\statevec{f}_{\statevec{u}} + B)\statemat{H}$ remains symmetric.
	Therefore, formulating the nonconservative system \eqref{eq:system_quasilinear_form} in terms of entropy variables
    \begin{equation}
    	\statevec{u}_t + \statevec{f}_x + \statemat{B}\statevec{u}_x = \statemat{H}\statevec{w}_t + (\statevec{f}_{\statevec{u}} + \statemat{B}) \statemat{H} \statevec{w}_x = 0,
    	\label{eq:noncons_system_symmetric_form}
    \end{equation}
    does not yield a symmetrized form of the system.
    
    While this does not affect the existence of an entropy pair or the corresponding entropy conservation law for nonconservative systems, the loss of symmetry restricts the design of entropy stable dissipation operators for numerical approximations.
   	In particular, it prevents the use of the eigenvector scaling of Barth \cite{barth1999numerical} commonly used to create matrix dissipation operators for conservative systems as it relies on entropy symmetrization.
	
\section{Entropy stable DGSEM}\label{sec:numerical_method}
	In the following we introduce a DGSEM for the one-dimensional nonconservative system \eqref{eq:system_quasilinear_form}.
	Similar to \cite{renac2019entropy}, we consider a special volume integral and provide a novel ansatz for EC fluctuations to obtain an EC discretization.
	Finally, we propose a blending procedure to construct ES fluctuations, that are introduced at element interfaces to construct a scheme that recovers a semidiscrete version of the entropy inequality \eqref{eq:entropy_inequality}.
	
	\subsection{Nonconservative DGSEM}
	First, we divide the physical domain $\Omega$ into $K$ non-overlapping elements $\Omega^k = [x_{k-1}, x_k]$ of size $\Delta x_k = x_k - x_{k-1}$.
	On each element $\Omega^k$, we introduce a local vector-valued approximation space $[\mathbb{P}^{\polydeg}(\Omega^k)]^{\statedim}$ of polynomials up to degree $\polydeg$ that are combined to obtain the DG approximation space
	\begin{equation}
		\mathbb{V}^{\polydeg} \coloneqq \{v: v|_{\Omega^k} \in \mathbb{P}^{\polydeg}, \; \forall \Omega^k \in \Omega\},
	\end{equation}
	with possibly discontinuous functions across element interfaces. The numerical approximation is then denoted by $\statevec{u} \in [\mathbb{V}^{\polydeg}]^{\statedim}$.
	Furthermore, we denote $\statevec{u}_k^{\pm} := \lim\limits_{\epsilon \rightarrow 0} \statevec{u}(x_k \pm \epsilon)$
	as the one-sided limits of the discontinuous solution $\statevec{u}$ at the interface node $x_k$ from the left $\boldsymbol{-}$ and right $\boldsymbol{+}$, respectively. We multiply with a test function $\phi \in \mathbb{V}^{\polydeg}$ and integrate over the domain to obtain the element-wise semidiscrete weak form  of \eqref{eq:system_quasilinear_form} \cite{rhebergen2008discontinuos, renac2019entropy}
\begin{equation}
	\int_{\Omega^k} \phi^T\statevec{u}_t \text{d}x
	+
	\int_{\Omega^k} \phi^T\statemat{A}\statevec{u}_x \text{d}x +
	\phi_{k-1}^+ \statevec{D}^+(\statevec{u}^-_{k-1}, \statevec{u}^+_{k-1}) +
	\phi_{k}^-\statevec{D}^-(\statevec{u}^-_{k}, \statevec{u}^+_{k}) = 0, \quad \forall \phi \in \mathbb{V}^n.
	\label{eq:weak_form_fluctuation}
\end{equation}
	Following the idea of path-conservative schemes \cite{pares2006numerical}, the integrals at interfaces in  \eqref{eq:weak_form_fluctuation} are split into a pair of fluctuations $\statevec{D}^{\pm}:\stateset \times \stateset \rightarrow \mathbb{R}^n$, which are Lipschitz continuous functions that satisfy the following consistency and path-conservation condition for two states $\statevec{u}_L, \statevec{u}_R \in \mathbb{R}^n$
	\begin{align}
		\statevec{D}^{\pm}(\statevec{u},\statevec{u}) 
		&= 0,\label{eq:fluctuation_condition_a}\\
		\statevec{D}^-(\statevec{u}_L, \statevec{u}_R) 
		+
		\statevec{D}^+(\statevec{u}_L, \statevec{u}_R)
		&= 
		\int_0^1 \statemat{A}(\Phi(s;\statevec{u}_L, \statevec{u}_R)) \partial_{\!s} \Phi(s;\statevec{u}_L, \statevec{u}_R) \text{d}s\label{eq:path_conservation_condition}.
	\end{align}
	This formulation assumes the definition of the nonconservative product as a Borel measure as introduced in \cite{dal1995definition}, where the nonconservative product at discontinuities depends on the Lipschitz continuous path function $\Phi(s;\statevec{u}_L, \statevec{u}_R) : [0,1] \times \stateset \times \stateset \rightarrow \stateset$, which connects the two states $\statevec{u}_L$, $\statevec{u}_R$ and satisfies the consistency conditions
	\begin{equation}
		\begin{aligned}
			\ncpath{0}{\statevec{u}_L}{\statevec{u}_R} &= \statevec{u}_L, \quad 
			\ncpath{1}{\statevec{u}_L}{\statevec{u}_R} = \statevec{u}_R, \quad
			\ncpath{s}{\statevec{u}}{\statevec{u}} &= \statevec{u}.
		\end{aligned}
	\end{equation}
	
	Once the path and fluctuations are chosen, each element $\Omega^k$ is mapped onto the reference element $E=[-1,1]$ with the mapping
	\begin{equation}
		x 
		= X^{k}(\xi) 
		= x_{k-1} + \frac{\xi + 1}{2}\Delta x_k, \quad \xi \in [-1, 1].
		\label{eq:element_mapping_func}
	\end{equation}
	We apply the affine map \eqref{eq:element_mapping_func} and rewrite the spatial derivatives in \eqref{eq:weak_form_fluctuation} in terms of $\xi$, to obtain the weak formulation on the reference element
	\begin{equation}
		\frac{\Delta x_k}{2}\int_{E} \phi^T\statevec{u}_t \text{d}x
		+
		\int_{E} \phi^T\statemat{A}\statevec{u}_{\xi} \text{d}\xi +
		\phi_{k-1}^+ \statevec{D}^+(\statevec{u}^-_{k-1}, \statevec{u}^+_{k-1}) +
		\phi_{k}^-\statevec{D}^-(\statevec{u}^-_{k}, \statevec{u}^+_{k}) = 0, \quad \forall \phi \in \mathbb{V}^n.
		\label{eq:weak_form_reference_element}
	\end{equation}
	On the reference element $E$, we approximate the solution with a local basis of polynomials of degree $N$
	\begin{equation}
		\statevec{u} \approx \statevec{U}^k(\xi) = \sum\limits_{j=0}^N \statevec{U}_{j}^k l_j(\xi)
		\label{eq:definition_polynomial_basis}
	\end{equation}
	that is spanned by the nodal Lagrange basis functions $l_j(\xi)$ with interpolation nodes $\{\xi\}_{i=0}^N$ set at Legendre-Gauss-Lobatto (LGL) points.
	For this choice of basis function we further define the discrete derivative operator \cite{kopriva2009implementing}
	\begin{equation}
		\mathcal{D}_{ij} := \frac{\text{d}l_j(\xi)}{\text{d} \xi}\Big|_{\xi = \xi_i}, \quad i,j=0,...,N,
		\label{eq:definition_derivative_matrix}
	\end{equation}
	approximate all integrals with LGL quadrature rules, and collocate interpolation and quadrature nodes
	\begin{equation}
		\int_{-1}^1 f(\xi) d\xi \approx \sum\limits_{j=0}^N \omega_j f(\xi_j),
		\label{eq:definition_gauss_quadrature}
	\end{equation}
	with LGL quadrature weights $\{\omega_j\}_{j=0}^N$. 
	As shown in \cite{gassner2013skew}, this specific choice of operators satisfies the diagonal norm SBP property
	\begin{equation}
		\mathcal{Q} + \mathcal{Q}^T = \mathcal{B}
		\label{eq:sbp_property}
	\end{equation}
	with the undivided difference matrix $\mathcal{Q}_{ij} = \omega_i \mathcal{D}_{ij}$  and boundary matrix $\mathcal{B}_{ij} = \delta_{iN}\delta_{jN} - \delta_{i0}\delta_{j0}$, where $\delta$ denotes the Kronecker delta.
	This property represents a discrete analog to integration-by-parts, which is key to prove entropy stability.
	
	We use the polynomial approximation \eqref{eq:definition_polynomial_basis} for the solution variables, set the Lagrange basis polynomials as test functions, apply LGL quadrature \eqref{eq:definition_gauss_quadrature}, and replace derivatives with the differentiation matrix \eqref{eq:definition_derivative_matrix} in \eqref{eq:weak_form_reference_element} to obtain the nonconservative DGSEM formulation
	\begin{equation}
		\omega_i \frac{\Delta x_k}{2} \dot{\statevec{U}}_i^k + 
		\omega_i \sum\limits_{m=0}^N \mathcal{D}_{im} \statemat{A}(\statevec{U}_i^k)\statevec{U}_m^k
		+ \delta_{i0}\statevec{D}^+(\statevec{U}_N^{k-1}, \statevec{U}_0^k)
		+ \delta_{iN}\statevec{D}^-(\statevec{U}_N^{k}, \statevec{U}_0^{k+1}) = 0.
		\label{eq:nc_DGSEM}
	\end{equation}
	
\subsection{Entropy conservative DGSEM}
	To obtain an approximation that recovers the entropy conservation law in the semidiscrete case we replace the volume integral in the DGSEM formulation \eqref{eq:nc_DGSEM} with a nonconservative version of the flux differencing volume integral from \cite{gassner2016split, carpenter2014entropy}
	\begin{equation}
		\omega_i \sum\limits_{m=0}^N \mathcal{D}_{im} \statemat{A}(\statevec{U}_i^k)\statevec{U}_m^k
		\approx
		\omega_i \sum\limits_{m=0}^N 2\mathcal{D}_{im} \statevec{D}^-(\statevec{U}_i^k,\statevec{U}_m^k),
		\label{eq:flux_diff_volume_integral}
	\end{equation}
	so that the DGSEM \eqref{eq:nc_DGSEM} is written in flux differencing form
	\begin{equation}
			\omega_i \frac{\Delta x_k}{2} \dot{\statevec{U}}_i^k + 
			\omega_i \sum\limits_{m=0}^N 2\mathcal{D}_{im} \statevec{D}^-(\statevec{U}_i^k,\statevec{U}_m^k)
			+ \delta_{i0}\statevec{D}^+(\statevec{U}_N^{k-1}, \statevec{U}_0^k)
			+ \delta_{iN}\statevec{D}^-(\statevec{U}_N^{k}. \statevec{U}_0^{k+1}) = 0.
		\label{eq:flux_diff_DGSEM}
	\end{equation}
	This formulation provides a consistent and high-order accurate approximation, provided the fluctuations satisfy the consistency condition
	\begin{equation}
			\frac{\partial \statevec{D}^-(\statevec{u}_L, \statevec{u}_R)}{\partial \statevec{u}_R}\Big|_{\statevec{u}_R = \statevec{u}_L}
            =
			\frac{1}{2}\statemat{A}(\statevec{u}_L).
			\label{eq:fluctuation_condition_c}
	\end{equation}
	While this approach is motivated by the work from \cite{renac2019entropy}, the volume integral differs as we assume a different ansatz for the volume fluctuations.
	
	\begin{lemma}\label{lemma:accuracy_dgsem}
		The flux differencing volume integral \eqref{eq:flux_diff_volume_integral} with fluctuations satisfying \eqref{eq:fluctuation_condition_c} approximates the nonconservative product $\statemat{A}\statevec{u}_{\xi}$ with the same order of accuracy $p$ as the derivative matrix \eqref{eq:definition_derivative_matrix}.
	\end{lemma}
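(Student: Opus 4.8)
The plan is to exploit the fact that, in the flux differencing volume term on the right of \eqref{eq:flux_diff_volume_integral}, the first argument of the fluctuation is frozen at the collocation node $\statevec{U}_i^k$ at which the residual is evaluated, so that this term is nothing but the discrete derivative operator \eqref{eq:definition_derivative_matrix} applied to the nodal values of a single smooth function. The accuracy claim then reduces to the known accuracy of $\mathcal{D}$ on smooth data, together with an identification of the leading-order term by the chain rule and the consistency condition \eqref{eq:fluctuation_condition_c}.

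Concretely, let $\statevec{u}(\xi)$ be sufficiently smooth with nodal values $\statevec{U}_j^k = \statevec{u}(\xi_j)$, fix a node index $i$, and introduce the auxiliary univariate function
\[
\statevec{g}_i(\xi) \coloneqq 2\,\statevec{D}^-\big(\statevec{u}(\xi_i),\,\statevec{u}(\xi)\big),
\]
whose first argument is constant in $\xi$. With this definition the flux differencing term at node $i$ is exactly $\sum_{m=0}^N \mathcal{D}_{im}\,\statevec{g}_i(\xi_m)$, i.e.\ the derivative matrix acting on the nodal values $\{\statevec{g}_i(\xi_m)\}_{m=0}^N$ of a smooth function. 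Hence this sum inherits the order of accuracy $p$ of $\mathcal{D}$, so that $\sum_{m} \mathcal{D}_{im}\,\statevec{g}_i(\xi_m) = \statevec{g}_i'(\xi_i) + \mathcal{O}(h^p)$, with $h$ a characteristic node spacing.

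It then remains to identify the exact derivative $\statevec{g}_i'(\xi_i)$ with the nonconservative product. Applying the chain rule in the second argument and evaluating at $\xi=\xi_i$ gives
\[
\statevec{g}_i'(\xi_i) = 2\,\frac{\partial \statevec{D}^-(\statevec{u}(\xi_i),\statevec{u}_R)}{\partial \statevec{u}_R}\bigg|_{\statevec{u}_R=\statevec{u}(\xi_i)}\statevec{u}_\xi(\xi_i),
\]
and substituting the consistency condition \eqref{eq:fluctuation_condition_c} with $\statevec{u}_L=\statevec{u}(\xi_i)$ collapses the prefactor and yields $\statevec{g}_i'(\xi_i)=\statemat{A}(\statevec{u}(\xi_i))\,\statevec{u}_\xi(\xi_i)$, which is precisely $\statemat{A}\statevec{u}_\xi$ evaluated at the node. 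Combining the two displays delivers the claimed order-$p$ approximation.

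I expect the only genuine subtlety to be the transfer-of-accuracy step: one must argue that applying $\mathcal{D}$ to the nodal values of the composite function $\statevec{g}_i$ really reproduces $\statevec{g}_i'(\xi_i)$ to order $p$. This is the standard accuracy estimate for the interpolation-based derivative matrix, and it requires $\statevec{D}^-$ to be sufficiently differentiable—more than the Lipschitz continuity assumed for stability—so I would state this smoothness hypothesis explicitly. Everything else is routine; in particular, the consistency condition \eqref{eq:fluctuation_condition_a} is not actually needed here, since $\mathcal{D}$ annihilates constants ($\sum_m \mathcal{D}_{im}=0$) and the frozen first argument therefore never contaminates the leading-order term. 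By contrast with the conservative flux differencing case, the asymmetric roles of the two fluctuation arguments make the argument shorter, as no symmetry of $\statevec{D}^-$ is required.
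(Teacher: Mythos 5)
Your proposal is correct and follows essentially the same route as the paper's proof: freeze the first argument of the fluctuation at node $i$, apply the order-$p$ accuracy of $\mathcal{D}$ to the resulting smooth univariate function, and identify the exact derivative via the chain rule and condition \eqref{eq:fluctuation_condition_c}. Your explicit auxiliary function $\statevec{g}_i$ and the remark that $\statevec{D}^-$ must be smoother than merely Lipschitz are welcome clarifications, but they do not change the argument.
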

	\begin{proof}
		 The proof for accuracy uses that $\mathcal{D}$ is a $p$th order accurate derivative operator. We use the chain rule and apply condition \eqref{eq:fluctuation_condition_c} to obtain 
		\begin{equation}
			\begin{aligned}
				\sum\limits_{m=0}^N 2 \mathcal{D}_{im} \statevec{D}^-(\statevec{u}(\xi_i), \statevec{u}(\xi_m)) 
				&= 
				2 \frac{\partial \statevec{D}^-(\statevec{u}(\xi_i), \statevec{u}(\xi_m))}{\partial \xi_m}\Big|_{\xi_m = 	\xi_i} + \mathcal{O}(\Delta \xi^p)
				\\&=
				2 \frac{\partial \statevec{D}^-(\statevec{u}_i, \statevec{u}_m)}{\partial \statevec{u}_m}\Big|_{u_m 	= u_i} \frac{\partial \statevec{u}(\xi)}{\partial \xi}\Big|_{\xi = \xi_i} + \mathcal{O}(\Delta \xi^p)
				\\&=
				\statemat{A}(\statevec{u}(\xi)) \frac{\partial \statevec{u}(\xi)}{\partial \xi}\Big|_{\xi = \xi_i} 
				+
				\mathcal{O}(\Delta \xi^p)
			\end{aligned}
		\end{equation}\qed
	\end{proof}
	\begin{remark}
		An analogous version of this proof was used in \cite{ranocha2018generalised} and \cite{chen2017entropy} to demonstrate accuracy of the flux differencing formulation for conservative fluxes.
	\end{remark}
	
	Next, we show that the flux differencing formulation creates a scheme that preserves an integral version of the entropy conservation law \eqref{eq:entropy_conservation_law} in the semidiscrete case.
	To do so, we assume the following structure on the fluctuations
	\begin{equation}
	   \statevec{D}^{-}(\statevec{u}_L, \statevec{u}_R) = - \statevec{D}^{+}(\statevec{u}_R, \statevec{u}_L)\label{eq:fluctuation_condition_b}
    \end{equation}
    and that fluctuations satisfy a discrete version of the entropy conservation condition
	\begin{equation}
		q(\statevec{u}_R) - q(\statevec{u}_L) = \statevec{w}(\statevec{u}_L)^T\statevec{D}^-(\statevec{u}_L, \statevec{u}_R) + \statevec{w}(\statevec{u}_R)^T\statevec{D}^+(\statevec{u}_L, \statevec{u}_R).
		\label{eq:entropy_conservation_condition}
	\end{equation}
	This leads to the definition of entropy conservative fluctuations.
	\begin{definition}[EC fluctuation]\label{def:ec_fluctuation}
		A fluctuation is defined as entropy conservative $\statevec{D}_{EC}^{\pm}$ if it satisfies the conditions \eqref{eq:fluctuation_condition_a}, \eqref{eq:fluctuation_condition_b}, \eqref{eq:fluctuation_condition_c}, \eqref{eq:path_conservation_condition}, \eqref{eq:entropy_conservation_condition}, collected below for convenience
		\begin{subequations}\label{eq:ec_conditions}
		\begin{align}
			\statevec{D}^{\pm}(\statevec{u},\statevec{u}) 
			&= 0,\label{eq:ec_condition_1}
			\\
			\statevec{D}^-(\statevec{u}_L, \statevec{u}_R) 
			+
			\statevec{D}^+(\statevec{u}_L, \statevec{u}_R),
			&= 
			\int_0^1 \statemat{A}(\Phi(s;\statevec{u}_L, \statevec{u}_R)) \partial_{\!s} \Phi(s;\statevec{u}_L, \statevec{u}_R),\label{eq:ec_condition_2}
			\\
			\statevec{D}^{-}(\statevec{u}_L, \statevec{u}_R) + \statevec{D}^{+}(\statevec{u}_R, \statevec{u}_L)
			&= 0,\label{eq:ec_condition_3}
			\\
			\statevec{w}_L^T\statevec{D}^-(\statevec{u}_L, \statevec{u}_R) + \statevec{w}_R^T\statevec{D}^+(\statevec{u}_L, \statevec{u}_R) &= q_R - q_L,\label{eq:ec_condition_4}
			\\
			\frac{\partial \statevec{D}^-(\statevec{u}_L, \statevec{u}_R)}{\partial \statevec{u}_R}\Big|_{\statevec{u}_R = \statevec{u}_L}
			&= \frac{1}{2}\statemat{A}(\statevec{u}_L).\label{eq:ec_condition_5}
		\end{align}
		\end{subequations}
	\end{definition}
	\begin{lemma}\label{lemma:ec_dgsem}
		The nonconservative DGSEM with flux differencing formulation \eqref{eq:flux_diff_DGSEM}
		and entropy conservative fluctuations $\statevec{D}^{\pm}_{EC}$ as defined in Definition~\ref{def:ec_fluctuation} recovers the semi-discrete entropy conservation law at the element level
		\begin{equation}
			\sum\limits_{i=0}^N \omega_i\frac{\Delta x_k}{2} \frac{\partial S(\statevec{U}_i^k)}{\partial t} 
			=
			Q(\statevec{U}_N^{k-1}, \statevec{U}_0^k)
			-
			Q(\statevec{U}_N^k, \statevec{U}_0^{k+1}),
		\end{equation}
		where $Q$ denotes a numerical entropy flux, consistent with the physical entropy flux $Q(\statevec{u}, \statevec{u}) = q(\statevec{u})$.
	\end{lemma}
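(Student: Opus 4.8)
The plan is to follow the standard flux differencing entropy stability argument, adapted to the asymmetric nonconservative fluctuation pair. First I would contract the $i$th nodal equation of the flux differencing DGSEM \eqref{eq:flux_diff_DGSEM} from the left with the entropy variables $\statevec{w}_i^T := \statevec{w}(\statevec{U}_i^k)^T$ and sum over $i = 0, \dots, N$. The temporal contribution collapses immediately by the chain rule, $\statevec{w}_i^T \dot{\statevec{U}}_i^k = \partial_t S(\statevec{U}_i^k)$, reproducing exactly the left-hand side of the claimed identity, so it remains to show that the volume and surface contributions combine into the telescoping entropy flux difference on the right.

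The heart of the argument is the volume term $2\sum_{i,m} \mathcal{Q}_{im}\,\statevec{w}_i^T \statevec{D}^-(\statevec{U}_i^k, \statevec{U}_m^k)$ with $\mathcal{Q}_{im} = \omega_i \mathcal{D}_{im}$. I would symmetrize this double sum by rewriting one copy of $\mathcal{Q}_{im}$ as $\mathcal{B}_{im} - \mathcal{Q}_{mi}$ using the SBP property \eqref{eq:sbp_property}, relabeling $i \leftrightarrow m$ in the $\mathcal{Q}_{mi}$ piece, and invoking the skew relation \eqref{eq:ec_condition_3} in the form $\statevec{D}^-(\statevec{U}_m^k, \statevec{U}_i^k) = -\statevec{D}^+(\statevec{U}_i^k, \statevec{U}_m^k)$. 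This converts the summand into the symmetric pair $\statevec{w}_i^T \statevec{D}^-(\statevec{U}_i^k, \statevec{U}_m^k) + \statevec{w}_m^T \statevec{D}^+(\statevec{U}_i^k, \statevec{U}_m^k)$, which by the discrete entropy conservation condition \eqref{eq:ec_condition_4} equals $q(\statevec{U}_m^k) - q(\statevec{U}_i^k)$. The leftover boundary piece $\sum_{i,m}\mathcal{B}_{im}\,\statevec{w}_i^T \statevec{D}^-(\statevec{U}_i^k, \statevec{U}_m^k)$ samples only the diagonal entries $\statevec{D}^-(\statevec{U}_N^k,\statevec{U}_N^k)$ and $\statevec{D}^-(\statevec{U}_0^k,\statevec{U}_0^k)$, both of which vanish by consistency \eqref{eq:ec_condition_1}.

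Next I would reduce $\sum_{i,m}\mathcal{Q}_{im}\bigl(q(\statevec{U}_m^k) - q(\statevec{U}_i^k)\bigr)$ with two elementary operator properties: the row-sum identity $\sum_m \mathcal{D}_{im} = 0$ (the Lagrange basis is a partition of unity, so its derivatives annihilate constants), which removes the $q_i$ contribution, and the column sum $\sum_i \mathcal{Q}_{im} = \delta_{mN} - \delta_{m0}$, obtained from \eqref{eq:sbp_property} together with the same row-sum identity. This collapses the entire volume term to the clean boundary difference $q(\statevec{U}_N^k) - q(\statevec{U}_0^k)$.

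Substituting this back and isolating the temporal sum leaves the interface contributions $q(\statevec{U}_0^k) - \statevec{w}_0^T \statevec{D}^+(\statevec{U}_N^{k-1}, \statevec{U}_0^k)$ at the left node $x_{k-1}$ and $-\bigl[q(\statevec{U}_N^k) + \statevec{w}_N^T \statevec{D}^-(\statevec{U}_N^k, \statevec{U}_0^{k+1})\bigr]$ at the right node $x_k$. I would then define the numerical entropy flux $Q(\statevec{u}_L, \statevec{u}_R) := q(\statevec{u}_L) + \statevec{w}_L^T \statevec{D}^-(\statevec{u}_L, \statevec{u}_R)$ and verify that the two interface expressions are its two equivalent single-valued representations: their difference is precisely $q_R - q_L - \bigl[\statevec{w}_L^T\statevec{D}^- + \statevec{w}_R^T\statevec{D}^+\bigr]$, which vanishes by \eqref{eq:ec_condition_4}, while consistency $Q(\statevec{u},\statevec{u}) = q(\statevec{u})$ follows from \eqref{eq:ec_condition_1}. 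This matches the two interface terms to $Q(\statevec{U}_N^{k-1}, \statevec{U}_0^k) - Q(\statevec{U}_N^k, \statevec{U}_0^{k+1})$ and closes the proof. I expect the main obstacle to be the bookkeeping in symmetrizing the volume double sum — specifically, pairing the skew condition \eqref{eq:ec_condition_3} with the index swap so that the \emph{asymmetric} fluctuations $\statevec{D}^-$ reassemble into the symmetric pair $(\statevec{D}^-,\statevec{D}^+)$ required by \eqref{eq:ec_condition_4}; everything else is routine telescoping once that pairing is correct.
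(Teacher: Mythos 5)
Your proposal is correct and follows essentially the same route as the paper's own proof: contract with entropy variables, use the SBP property to split $2\mathcal{Q}$ into $\mathcal{Q}-\mathcal{Q}^T+\mathcal{B}$, swap indices and invoke the skew relation \eqref{eq:ec_condition_3} so that \eqref{eq:ec_condition_4} collapses the volume term to $\sum_{i,m}\mathcal{Q}_{im}(q_m-q_i)=q_N-q_0$, then absorb the interface terms into a numerical entropy flux whose two single-valued representations agree by \eqref{eq:ec_condition_4}. Your explicit row-sum/column-sum reduction of $\sum_{i,m}\mathcal{Q}_{im}(q_m-q_i)$ and your careful pairing of $\statevec{D}^-$ into $(\statevec{D}^-,\statevec{D}^+)$ spell out steps the paper states tersely (and where its displayed intermediate lines contain a small $\statevec{D}^-$/$\statevec{D}^+$ typo), but the argument is the same.
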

	\begin{proof} 
		To demonstrate entropy conservation for the flux differencing formulation \eqref{eq:flux_diff_DGSEM}, we multiply with entropy variables $\statevec{W}_i^k := w(\statevec{U}_i^k)$ and sum over the nodal values to obtain
		\begin{equation}
			\begin{aligned}
				\sum\limits_{i=0}^N \omega_i\frac{\Delta x_k}{2} {\statevec{W}_i^k}^T\frac{\partial \statevec{U}_i^k}{\partial t} = 
				- \sum\limits_{i=0}^N {\statevec{W}_i^k}^T \sum\limits_{m=0}^N 2\mathcal{D}_{im} \statevec{D}^-_{EC}(\statevec{U}_i^k,\statevec{U}^k_m)
				- \sum\limits_{i=0}^N \delta_{i0}{\statevec{W}_i^k}^T\statevec{D}^+_{EC}(\statevec{U}_N^{k-1}, \statevec{U}_0^k)
				- \sum\limits_{i=0}^N \delta_{iN}{\statevec{W}_i^k}^T\statevec{D}^-_{EC}(\statevec{U}_N^{k}, \statevec{U}_0^{k+1}).
			\end{aligned}
		\end{equation}
		Next, we assume that the chain rule holds in time to obtain an evolution equation for the entropy
		\begin{equation}
			\begin{aligned}
				\sum\limits_{i=0}^N \omega_i\frac{\Delta x_k}{2}\frac{\partial S(\statevec{U}_i^k)}{\partial t} = 
				- \sum\limits_{i=0}^N {\statevec{W}_i^k}^T \sum\limits_{m=0}^N 2\mathcal{D}_{im} \statevec{D}^-_{EC}(\statevec{U}_i^k,\statevec{U}_m^k)
				- \statevec{W}_0^T\statevec{D}^+_{EC}(\statevec{U}_N^{k-1}, \statevec{U}_0^k)
				- \statevec{W}_N^T\statevec{D}^-_{EC}(\statevec{U}_N^{k}, \statevec{U}_0^{k+1})
			\end{aligned}.
			\label{eq:semidiscrete_ec_law_step_1}
		\end{equation}
		
		Next, we examine the volume term in \eqref{eq:semidiscrete_ec_law_step_1} to demonstrate that volume contributions can be rewritten as entropy fluxes at interfaces.
		We first apply the SBP property \eqref{eq:sbp_property} and use that the boundary contribution $\mathcal{B}_{im}\statevec{D}^-_{EC}(\statevec{U}_i^k, \statevec{U}_m^k)$ vanishes due to $\eqref{eq:ec_condition_1}$.
		Then, we first swap indices $i \leftrightarrow m$, apply the condition \eqref{eq:ec_condition_3} and finally use the entropy conservation condition \eqref{eq:ec_condition_4} and the SBP property \eqref{eq:sbp_property} to obtain the result that the volume integral generates the entropy fluxes at the boundary  
		\begin{equation}
			\begin{aligned}
				\sum\limits_{i = 0}^N \omega_i {\statevec{W}_i^k}^T \sum\limits_{m=0}^N 2\mathcal{D}_{im} \statevec{D}^-_{EC}(\statevec{U}_i^k, \statevec{U}_m^k) 
				&=
				\sum\limits_{i,m = 0}^N {\statevec{W}_i^k}^T 2\mathcal{Q}_{im} \statevec{D}^-_{EC}(\statevec{U}_i^k, \statevec{U}_m^k) 
				\\&=
				\sum\limits_{i,m = 0}^N {\statevec{W}_i^k}^T (\mathcal{Q}_{im} - \mathcal{Q}_{mi} + \mathcal{B}_{im}) \statevec{D}^-_{EC}(\statevec{U}_i^k, \statevec{U}_m^k) 
				\\&=
				\sum\limits_{i,m = 0}^N \mathcal{Q}_{im}\left( {\statevec{W}_i^k}^T  \statevec{D}^-_{EC}(\statevec{U}_i^k, \statevec{U}_m^k) 
				- 
				\statevec{W}_m^T \statevec{D}^+_{EC}(\statevec{U}_m, \statevec{U}_i)\right)
				\\&=
				\sum\limits_{i,m = 0}^N \mathcal{Q}_{im} \left({\statevec{W}_i^k}^T \statevec{D}^-_{EC}(\statevec{U}_i^k, \statevec{U}_m^k)
				+ 
				\statevec{W}_m^T \statevec{D}^-_{EC}(\statevec{U}_i^k, \statevec{U}_m^k)\right)
				\\&=
				\sum\limits_{i,m = 0}^N \mathcal{Q}_{im}(q_m - q_i) = q_N - q_0.
			\end{aligned}
		\end{equation}
		Inserting this result in \eqref{eq:semidiscrete_ec_law_step_1}, we then recover the element-wise form of the semi-discrete entropy conservation law
		\begin{equation}
			\begin{aligned}
				\sum\limits_{i=0}^N \omega_i\frac{\Delta x_k}{2} \frac{\partial S(\statevec{U}_i^k)}{\partial t} 
				&= 
				\left(q_0^k 
				- {\statevec{W}_0^k}^T\statevec{D}^+_{EC}(\statevec{U}_N^{k-1}, \statevec{U}_0^k)\right)
				-\left(q_N^k 
				+ {\statevec{W}_N^k}^T\statevec{D}^-_{EC}(\statevec{U}_N^{k}, \statevec{U}_0^{k+1})\right)
				\\
				&=
				Q(\statevec{U}_N^{k-1}, \statevec{U}_0^k)
				-Q(\statevec{U}_N^k, \statevec{U}_0^{k+1}),
			\end{aligned}
		\end{equation}
		where using \eqref{eq:ec_condition_4}, we introduce the numerical entropy fluxes
		\begin{equation}
			Q(\statevec{U}_N^{k-1}, \statevec{U}_0^k)
			=
			\left(q_0^k - {\statevec{W}_0^k}^T\statevec{D}^+_{EC}(\statevec{U}_N^{k-1}, \statevec{U}_0^k)\right)
			=
			\left(q_N^{k-1} + {\statevec{W}_N^{k-1}}^T\statevec{D}^-_{EC}(\statevec{U}_N^{k-1}, \statevec{U}_0^{k})\right).
		\end{equation}
		\qed
	\end{proof}
	
	\begin{theorem}
		The nonconservative DGSEM with flux differencing formulation
		\begin{equation*}
			\omega_i \frac{\Delta x_k}{2} \frac{\partial \statevec{U}_i^k}{\partial t} 
			+
			\omega_i \sum\limits_{m=0}^N 2\mathcal{D}_{im} \statevec{D}^-_{EC}(\statevec{U}_i^k,\statevec{U}_m^k)
			+ \delta_{i0}\statevec{D}_{EC}^+(\statevec{U}_N^{k-1}, \statevec{U}_0^k)
			+ \delta_{iN}\statevec{D}^-_{EC}(\statevec{U}_N^{k}, \statevec{U}_0^{k+1}) = 0,
		\end{equation*}
		and EC fluctuations $\statevec{D}^{\pm}_{EC}$ as defined in Definition~\ref{def:ec_fluctuation}
		provides a design order accurate and entropy conservative approximation of system \eqref{eq:system_quasilinear_form}.
	\end{theorem}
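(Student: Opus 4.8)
The plan is to recognize that this theorem is a synthesis of the two preceding lemmas: Lemma~\ref{lemma:accuracy_dgsem} delivers the accuracy statement and Lemma~\ref{lemma:ec_dgsem} delivers the entropy conservation statement. The single observation that glues them together is that any pair of EC fluctuations $\statevec{D}^{\pm}_{EC}$, as collected in Definition~\ref{def:ec_fluctuation}, simultaneously satisfies the hypotheses of both lemmas. Concretely, condition \eqref{eq:ec_condition_5} is exactly the consistency condition \eqref{eq:fluctuation_condition_c} required by Lemma~\ref{lemma:accuracy_dgsem}, while conditions \eqref{eq:ec_condition_1}--\eqref{eq:ec_condition_4} are precisely what Lemma~\ref{lemma:ec_dgsem} assumes. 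Thus, once these two facts are lined up, the proof reduces to invoking the lemmas and handling the passage from the element-local to the global statement.

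For the accuracy claim, I would first apply Lemma~\ref{lemma:accuracy_dgsem} to conclude that the flux differencing volume term in the scheme reproduces the nonconservative product $\statemat{A}\statevec{u}_\xi$ to order $\polydeg$. It then remains to argue that the surface fluctuations do not degrade this order. Here I would invoke that $\statevec{D}^{\pm}_{EC}$ are Lipschitz continuous and consistent in the sense of \eqref{eq:ec_condition_1}, together with the path-conservation property \eqref{eq:ec_condition_2}; this is the standard setting in which a DG surface coupling preserves the design order of the interior operator, so a routine truncation-error estimate closes the argument.

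For the entropy conservation claim, I would apply Lemma~\ref{lemma:ec_dgsem} on each element $\Omega^k$ to obtain the element-local balance relating the time derivative of the total entropy to the difference of the numerical entropy fluxes $Q$ at the two element interfaces. Summing this relation over $k = 1, \dots, K$, the interior interface contributions must telescope. The crucial point enabling the cancellation is that the numerical entropy flux is single-valued at each interface: the value $Q(\statevec{U}_N^{k-1}, \statevec{U}_0^{k})$ computed from element $\Omega^{k-1}$ coincides with the one computed from $\Omega^k$, which is exactly the two-sided identity for $Q$ established at the end of the proof of Lemma~\ref{lemma:ec_dgsem}. After telescoping, only the physical boundary fluxes survive, yielding the global semidiscrete entropy conservation law and thereby the entropy conservative property of the scheme.

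The main obstacle is not computational but structural: it lies in verifying that a single object, the EC fluctuation of Definition~\ref{def:ec_fluctuation}, meets the distinct requirements of the two lemmas, and in confirming that the single-valuedness of $Q$ is genuinely guaranteed by condition \eqref{eq:ec_condition_4} rather than merely assumed. Once these consistency checks are in place, the remaining steps are routine and the two lemmas do the heavy lifting.
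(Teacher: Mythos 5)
Your proposal is correct and follows the same route as the paper, whose proof is simply the one-line observation that the result follows from Lemma~\ref{lemma:accuracy_dgsem} (accuracy) and Lemma~\ref{lemma:ec_dgsem} (entropy conservation). Your additional checks --- that conditions \eqref{eq:ec_condition_1}--\eqref{eq:ec_condition_5} cover the hypotheses of both lemmas, and that the single-valuedness of the numerical entropy flux $Q$ makes the element-local balances telescope to a global conservation statement --- are exactly the details the paper leaves implicit, and you fill them in correctly.
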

	\begin{proof}
		The proof follows directly from Lemma \ref{lemma:accuracy_dgsem} and Lemma \ref{lemma:ec_dgsem}.
		\qed
	\end{proof}
	With the conditions for high-order accuracy and entropy conservation established, it remains to demonstrate how to construct the EC fluctuations.
    We provide two strategies to construct such entropy conservative fluctuations.
	In the first strategy, fluctuations are constructed explicitly via a linear path in entropy variables, while in the second strategy, we prescribe a structure on the fluctuations that guarantees the high-order accuracy, and then derive an instance that also complies with the entropy conservation condition \eqref{eq:ec_condition_4}.
	
	\subsubsection{Entropy conservative fluctuations: Strategy 1}\label{sec:ec_strategy_1}
	The first strategy can be viewed as a generalization of Tadmor's entropy conservative fluxes \cite{tadmor1987numerical} to nonconservative systems.
	The approach builds on the method proposed by Castro et al. \cite{castro2013entropy} to construct entropy conservative fluctuations by integrating the entropy flux compatibility condition \eqref{eq:entropy_flux_compatibility_relation}. In \cite{castro2013entropy} this strategy was used to construct entropy conservative fluctuations for a general family of paths.
	Here, we modify the approach by writing the compatibility condition in terms of entropy variables to obtain $q_w = \statevec{w}^T\statemat{A}\,\statemat{H}$, rather than solution variables. Furthermore, we consider the linear path in entropy variables
	\begin{equation}
		\Phi(s):=\Phi(s;\statevec{u}_L,\statevec{u}_R) = \statevec{w}_L + s \jump{\statevec{w}}.
		\label{eq:path_entropy_vars}
	\end{equation}
	We introduce notation to denote the jump and arithmetic average between two states $(\cdot)_L$ and $(\cdot)_R$ as
	\begin{equation}
		 \jump{\cdot} \coloneqq (\cdot)_R - (\cdot)_L,
		 \quad
		 \avg{\cdot} \coloneqq \frac{1}{2}\left((\cdot)_R + (\cdot)_L\right).
    \end{equation}
    
	Integrating the entropy flux compatibility condition and introducing the linear path \eqref{eq:path_entropy_vars} yields
	\begin{equation}
		\jump{q} = \int_0^1 \Phi(s)^T\statemat{A}(\statevec{u}(\Phi(s)))\statemat{H}(\Phi(s)) \frac{\partial \Phi(s)}{\partial s} \text{d}s
		=
		\int_0^1 (\statevec{w}_L + s \jump{\statevec{w}})^T\statemat{A}\,\statemat{H}(\Phi(s)) \jump{\statevec{w}} \text{d}s.
	\end{equation}
	The resulting integral is split into distinct contributions, by factoring out $\statevec{w}_L$ and $\statevec{w}_R$ to recover the entropy conservation condition \eqref{eq:ec_condition_4}, which defines the entropy conservative fluctuations
	\begin{equation}
		\begin{aligned}
			\jump{q}
			&=
			(\statevec{w}_L)^T \int_0^1 (1 - s)\statemat{A}\,\statemat{H}(\Phi(s)) \jump{\statevec{w}} \text{d}s
			+
			(\statevec{w}_R)^T \int_0^1 s\statemat{A}\,\statemat{H}(\Phi(s)) \jump{\statevec{w}} \text{d}s
			\\&=
			(\statevec{w}_L)^T \statevec{D}^-\left(\statevec{u}_L, \statevec{u}_R\right)
			+
			(\statevec{w}_R)^T \statevec{D}^+\left(\statevec{u}_L, \statevec{u}_R\right).
			\label{eq:entropy_conservation_condition_strategy_1}
		\end{aligned}
	\end{equation}
	
	\begin{theorem}\label{thm:ec_fluctuation_strategy_1}
		The fluctuations 
		\begin{equation}
			\statevec{D}^-(\statevec{u}_L, \statevec{u}_R)
			=
			\int_0^1 \left(1-s\right)\statemat{A}\,\statemat{H}(\Phi(s)) \, \text{d}s \jump{\statevec{w}},
			\quad
			\statevec{D}^+(\statevec{u}_L, \statevec{u}_R)
			=
			\int_0^1 s\statemat{A}\,\statemat{H}(\Phi(s)) \, \text{d}s \jump{\statevec{w}}
			\label{eq:definition_ec_flux_strategy_1}
		\end{equation}
		evaluated along the linear path in entropy variables \eqref{eq:path_entropy_vars} are entropy conservative fluctuations according to Definition~\ref{def:ec_fluctuation}.
	\end{theorem}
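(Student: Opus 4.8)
The plan is to check, one condition at a time, that the pair \eqref{eq:definition_ec_flux_strategy_1} meets every requirement \eqref{eq:ec_condition_1}--\eqref{eq:ec_condition_5} of Definition~\ref{def:ec_fluctuation}. Two of these need essentially no new work. The entropy conservation condition \eqref{eq:ec_condition_4} is exactly the identity \eqref{eq:entropy_conservation_condition_strategy_1} that motivated the construction: integrating the compatibility relation written in entropy variables, $q_{\statevec{w}} = \statevec{w}^T\statemat{A}\,\statemat{H}$, along the linear path and using the splitting $\statevec{w}_L + s\jump{\statevec{w}} = (1-s)\statevec{w}_L + s\statevec{w}_R$ reproduces \eqref{eq:ec_condition_4} by the very definition of $\statevec{D}^{\pm}$. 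The consistency condition \eqref{eq:ec_condition_1} is immediate, since $\statevec{u}_L = \statevec{u}_R$ forces $\jump{\statevec{w}} = 0$ and both fluctuations carry $\jump{\statevec{w}}$ as a trailing factor.

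For the path-conservation condition \eqref{eq:ec_condition_2} I would add $\statevec{D}^-$ and $\statevec{D}^+$, collapse the weights via $(1-s)+s=1$, and identify $\int_0^1 \statemat{A}\,\statemat{H}(\Phi(s))\,\mathrm{d}s\,\jump{\statevec{w}}$ with the required integral. The point to make explicit is that the linear path in entropy variables induces a state-space path $\statevec{u}(\Phi(s))$ with $\partial_s \statevec{u}(\Phi(s)) = \statemat{H}(\Phi(s))\,\partial_s\Phi(s) = \statemat{H}(\Phi(s))\jump{\statevec{w}}$, so the entropy-variable integral coincides with the integral over the induced path on the right of \eqref{eq:ec_condition_2}. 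The symmetry condition \eqref{eq:ec_condition_3} then follows from a change of variables: swapping $\statevec{u}_L \leftrightarrow \statevec{u}_R$ in $\statevec{D}^+$ reverses the path to $\statevec{w}_R - s\jump{\statevec{w}}$ and flips the sign of the jump, after which the substitution $s \mapsto 1-s$ maps the reversed path back onto $\Phi$ and turns the weight $s$ into $1-s$; collecting the sign yields $\statevec{D}^+(\statevec{u}_R,\statevec{u}_L) = -\statevec{D}^-(\statevec{u}_L,\statevec{u}_R)$.

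The main obstacle is the high-order consistency condition \eqref{eq:ec_condition_5}, which requires differentiating $\statevec{D}^-$ with respect to $\statevec{u}_R$ and evaluating on the diagonal $\statevec{u}_R = \statevec{u}_L$. Writing $\statevec{D}^- = \statemat{M}(\statevec{u}_L,\statevec{u}_R)\jump{\statevec{w}}$ with $\statemat{M} = \int_0^1 (1-s)\statemat{A}\,\statemat{H}(\Phi(s))\,\mathrm{d}s$, the product rule produces two terms. The term that differentiates $\statemat{M}$ still carries the factor $\jump{\statevec{w}}$, which vanishes on the diagonal, so only the term that differentiates $\jump{\statevec{w}}$ survives; this is the step that needs care, both in justifying the dropped term and in keeping the matrix ordering straight. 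On the diagonal the path degenerates to the constant $\Phi(s)\equiv\statevec{w}_L$, so $\statemat{M} = \tfrac{1}{2}\statemat{A}(\statevec{u}_L)\statemat{H}(\statevec{w}_L)$ using $\int_0^1(1-s)\,\mathrm{d}s = \tfrac{1}{2}$, while $\partial\jump{\statevec{w}}/\partial\statevec{u}_R = \statevec{w}_{\statevec{u}}(\statevec{u}_R) = \statemat{H}^{-1}$ evaluated at $\statevec{u}_L$. Since $\statemat{H}$ and $\statemat{H}^{-1}$ are mutually inverse at the same state, their product is the identity and we are left with $\tfrac{1}{2}\statemat{A}(\statevec{u}_L)$, which is exactly \eqref{eq:ec_condition_5}.
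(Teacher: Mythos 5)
Your proposal is correct and takes essentially the same route as the paper's own proof: conditions \eqref{eq:ec_condition_1}, \eqref{eq:ec_condition_2}, and \eqref{eq:ec_condition_4} are read off from the construction, \eqref{eq:ec_condition_3} follows by the substitution $s \mapsto 1-s$ on the reversed path, and \eqref{eq:ec_condition_5} follows from the product rule, where the term differentiating the matrix factor vanishes with $\jump{\statevec{w}}$ on the diagonal and the surviving term gives $\tfrac{1}{2}\statemat{A}\,\statemat{H}\,\statemat{H}^{-1} = \tfrac{1}{2}\statemat{A}(\statevec{u}_L)$. Your explicit identification of the induced state-space path via $\partial_s \statevec{u}(\Phi(s)) = \statemat{H}(\Phi(s))\jump{\statevec{w}}$ when verifying \eqref{eq:ec_condition_2} is a small extra justification the paper leaves implicit, but it is not a different method.
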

	\begin{proof}
		That the path-conservation condition \eqref{eq:ec_condition_2}, entropy conservation condition \eqref{eq:ec_condition_4} and \eqref{eq:ec_condition_1} hold follows directly from the construction.
		To demonstrate \eqref{eq:ec_condition_5} we evaluate the Jacobian element-wise
		\begin{equation}
			\left(\frac{\partial \statevec{D}^{-}(\statevec{u}_L,	\statevec{u}_R)}{\partial \statevec{u}_R}\Big|_{\statevec{u}_R = \statevec{u}_L}\right)_{ij}
			=
			\frac{\partial \statevec{D}^{-}_{(i)}(\statevec{u}_L, 	\statevec{u}_R)}{\partial {\statevec{u}_R}_{(j)}}\Big|_{\statevec{u}_R = \statevec{u}_L}, \quad i,j = 1,...,n,
		\end{equation}
		where we denote the $i$th component with subscript $(i)$.
		Then, applying the product rule and the identity $\statemat{H}^{-1}(\statevec{u}) = \statevec{w}_{\statevec{u}}$, for each component we obtain
		\begin{equation}
			\begin{aligned}
				\frac{\partial \statevec{D}^{-}_{(i)} (\statevec{u}_L, 	\statevec{u}_R)}{\partial {\statevec{u}_R}_{(j)}}\Big|_{\statevec{u}_R = \statevec{u}_L}
				&=
				\frac{\partial}{\partial {\statevec{u}_R}_{(j)}} 	\left( \sum\limits_{k=1}^n \int_0^1 (1-s) \left(\statemat{A}\,\statemat{H}\right)_{ik}(\Phi(s))\text{d}s \left(\statevec{w}_{(k)}(\statevec{u}_R) - \statevec{w}_{(k)}(\statevec{u}_L)\right)\right)\Bigg|_{\statevec{u}_R = \statevec{u}_L}
				\\&=
				\frac{\partial}{\partial {\statevec{u}_R}_{(j)}} 	\left( \sum\limits_{k=1}^n \int_0^1 (1-s) 	\left(\statemat{A}\,\statemat{H}\right)_{ik}(\Phi(s))\text{d}s\right)\Bigg|_{\statevec{u}_R = \statevec{u}_L} \left(\statevec{w}_{(k)}(\statevec{u}_L) - \statevec{w}_{(k)}(\statevec{u}_L)\right)
				\\ &\quad+
				\int_0^1 (1-s)\text{d}s \sum\limits_{k=1}^n 	\left(\statemat{A}\,\statemat{H}\right)_{ik}(\statevec{w}(\statevec{u}_L)) \frac{\partial \statevec{w}_{(k)}(\statevec{u}_R)}{\partial {\statevec{u}_R}_{(j)}}\Bigg|_{\statevec{u}_R = \statevec{u}_L}
				\\&=
				\frac{1}{2}\sum\limits_{l=1}^n\statemat{A}_{il}(\statevec{u}_L)\sum\limits_{k=1}^n\statemat{H}_{lk}(\statevec{w}(\statevec{u}_L)) \statemat{H}^{-1}_{kj}(\statevec{u}_L)
				\\&=
				\frac{1}{2} \sum\limits_{l=1}^n 	\statemat{A}_{il}(\statevec{u}_L)\delta_{lj}
				=
				\frac{1}{2}\statemat{A}_{ij}(\statevec{u}_L).
			\end{aligned}
		\end{equation}
		
		Lastly, \eqref{eq:ec_condition_3} follows by substitution of $\hat{s} \coloneqq 1 - s$
		\begin{equation}
			\begin{aligned}
				\statevec{D}^-\left(\statevec{u}_L, \statevec{u}_R\right)
				&=
				\int_0^1 \hat{s} \statemat{A}\,\statemat{H}(\statevec{w}_R + \hat{s} \left(\statevec{w}_L - \statevec{w}_R\right)) \jump{\statevec{w}} \text{d}s
				\\&=
				-\int_0^1 \hat{s}\statemat{A}\,\statemat{H}(\statevec{w}_L + \hat{s}\left(\statevec{w}_R - \statevec{w}_L\right)) \jump{\statevec{w}} \text{d}\hat{s}
				=
				-\statevec{D}^+\left(\statevec{u}_R, \statevec{u}_L\right).
			\end{aligned}
		\end{equation}\qed
	\end{proof}
	

	Similar to Tadmor's entropy fluxes for conservative systems, the integrals in \eqref{eq:definition_ec_flux_strategy_1} may not admit a closed-form solution, even when using a linear path.
	In such cases integrals are approximated numerically with Gauss-quadrature. However, depending on the type of nonlinearity, these quadrature rules can fail to approximate the integral exactly.
	Therefore, the accuracy of the quadrature rule should be chosen high enough to ensure approximation errors remain around machine precision in order to ensure entropy conservation holds.
	On the other hand, this requirement makes the evaluation of such fluctuations computationally expensive, which motivates the next strategy.
	
	\subsubsection{Entropy conservative fluctuations: Strategy 2}\label{ref:ec_strategy_2}	
	An alternative approach common in the design of entropy conservative fluxes for conservation laws is to assume two-point flux functions that are symmetric and consistent and then determine a closed form of the entropy conservative flux from the entropy conservation condition \eqref{eq:ec_condition_4}.
	To generalize this approach to nonconservative systems, we separate conservative and nonconservative contributions in \eqref{eq:ec_condition_2}
	\begin{equation}\label{eq:strat_2_splitting_1}
		\begin{aligned}
			\statevec{D}^-(\statevec{u}_L, \statevec{u}_R) + \statevec{D}^+(\statevec{u}_L, \statevec{u}_R) 
			&= 
			\int_0^1 \statemat{B}(\Phi(s; \statevec{u}_L, \statevec{u}_R) \frac{\partial \Phi(s; \statevec{u}_L, \statevec{u}_R)}{\partial s} \text{d}s
			+ \jump{\statevec{f}},
		\end{aligned}
	\end{equation}
	In contrast to the previous strategy, we now seek a closed form solution to the integral in \eqref{eq:strat_2_splitting_1}.
    Especially in the presence of nonlinear terms, this is simplified if the nonconservative product $\statemat{B}(\statevec{u})\statevec{u}_x$ is expressed in a new set of variables $\statevec{v}(\statevec{u})$ to obtain
	\begin{equation}
		\begin{aligned}
			\statevec{D}^-(\statevec{u}_L, \statevec{u}_R) + \statevec{D}^+(\statevec{u}_L, \statevec{u}_R) 
			&= 
			\int_0^1 \statemat{B}(\Phi(s)) \statevec{u}_{\statevec{v}}(\Phi(s)) \frac{\partial \Phi(s)}{\partial s} \text{d}s
			+ \jump{\statevec{f}}.
		\end{aligned}
	\end{equation}
	where $\Phi(s) \coloneqq \Phi(s; \statevec{u}_L, \statevec{u}_R)$ denotes a linear path with respect to $\statevec{v}(\statevec{u})$
	\begin{equation}
		\Phi(s; \statevec{u}_L, \statevec{u}_R) := \statevec{v}(\statevec{u}_L) + s (\statevec{v}(\statevec{u}_R) - \statevec{v}(\statevec{u}_L)).
	\end{equation}
	
	Analogous to the form of the EC fluctuations \eqref{eq:definition_ec_flux_strategy_1}, we assume that fluctuations have the structure
	\begin{equation}
		\begin{aligned}
		\statevec{D}^{\pm}(\statevec{u}_L, \statevec{u}_R) &= \frac{1}{2}\bar{\statemat{B}}^{\pm}(\statevec{u}_L, \statevec{u}_R)\jump{\statevec{v}} \pm \left(\statevec{f}^{\pm} - \statevec{f}^{*}(\statevec{u}_L, \statevec{u}_R)\right),
		\end{aligned}
		\label{eq:definition_ec_flux_strategy_2}
	\end{equation}
	where $\bar{\statemat{B}}^{\pm} : \stateset \times \stateset \rightarrow \mathbb{R}^{n \times n}$ is a two-point matrix-valued function satisfying
	\begin{align}
		2\bar{\statemat{B}}^{\pm}(\statevec{u}, \statevec{u}) &= \statemat{B}(\statevec{u})\statevec{u}_{\statevec{v}}(\statevec{u})\label{eq:fluctuation_strat2_cond1}, 
		\\
		\bar{\statemat{B}}^{-}(\statevec{u}_R, \statevec{u}_L) 
		&=
		\bar{\statemat{B}}^{+}(\statevec{u}_L, \statevec{u}_R),
		\label{eq:fluctuation_strat2_cond2}
		\\
		\bar{\statemat{B}}^{-}(\statevec{u}_L, \statevec{u}_R) 
		+
		\bar{\statemat{B}}^{+}(\statevec{u}_L, \statevec{u}_R)
		&=
		\int_0^1 \statemat{B}(\Phi(s))\statevec{u}_{\statevec{v}}(\Phi(s)) \text{d}s.
		\label{eq:fluctuation_strat2_cond3}
	\end{align}
	Furthermore, $\statevec{f}^* : \stateset \times \stateset \rightarrow \mathbb{R}^n$ is a symmetric and consistent two-point flux function
	\begin{equation}\label{eq:consistency_condition_cons_flux}
		\begin{aligned}
			\statevec{f}^*(\statevec{u}, \statevec{u}) &= \statevec{f}(\statevec{u}),\\
			\statevec{f}^*(\statevec{u}_L, \statevec{u}_R) &= \statevec{f}^*(\statevec{u}_R, \statevec{u}_L),
		\end{aligned}
	\end{equation} 
	and we introduce the notation $\statevec{f}^- = \statevec{f}(\statevec{u}_L)$ and $\statevec{f}^{+} = \statevec{f}(\statevec{u}_R)$ to denote the conservative flux evaluated at the left and right solution states.
	
	With this structure we can then single out those fluctuations that provide an entropy conservative discretization by substituting the ansatz \eqref{eq:definition_ec_flux_strategy_2} into the entropy conservation condition \eqref{eq:ec_condition_4}, which yields
	\begin{equation}
		\jump{q - \statevec{w}^T\statevec{f}} = \statevec{w}_R^T\bar{\statemat{B}}^+(\statevec{u}_L, \statevec{u}_R)\jump{\statevec{v}} + \statevec{w}_L^T\bar{\statemat{B}}^-(\statevec{u}_L, \statevec{u}_R)\jump{\statevec{v}} - \jump{\statevec{w}}^T\statevec{f}^*(\statevec{u}_L, \statevec{u}_R).
		\label{eq:entropy_conservation_condition_strategy_2}
	\end{equation}

	\begin{theorem}\label{thm:ec_fluxes_strat_2}
		The fluctuations 
		\begin{equation*}
				\statevec{D}^{\pm}(\statevec{u}_L, \statevec{u}_R) = \bar{\statemat{B}}^{\pm}(\statevec{u}_L, \statevec{u}_R)\jump{\statevec{v}} \pm \left(\statevec{f}^{\pm} - \statevec{f}^{*}(\statevec{u}_L, \statevec{u}_R)\right),
		\end{equation*}
		satisfying \eqref{eq:fluctuation_strat2_cond1}, \eqref{eq:fluctuation_strat2_cond2}, \eqref{eq:fluctuation_strat2_cond3}, \eqref{eq:consistency_condition_cons_flux} and \eqref{eq:entropy_conservation_condition_strategy_2}
		are entropy conservative fluctuations according to Definition \ref{def:ec_fluctuation}.
	\end{theorem}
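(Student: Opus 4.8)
The plan is to verify, one by one, the five defining conditions \eqref{eq:ec_condition_1}--\eqref{eq:ec_condition_5} of Definition~\ref{def:ec_fluctuation} for the proposed ansatz, using the structural hypotheses \eqref{eq:fluctuation_strat2_cond1}--\eqref{eq:fluctuation_strat2_cond3} on the two-point matrix $\bar{\statemat{B}}^{\pm}$ and the symmetry/consistency \eqref{eq:consistency_condition_cons_flux} of the flux $\statevec{f}^*$. Three of these are essentially bookkeeping and I would dispatch them first. Consistency \eqref{eq:ec_condition_1} is immediate: putting $\statevec{u}_L=\statevec{u}_R$ forces $\jump{\statevec{v}}=0$ and, by \eqref{eq:consistency_condition_cons_flux}, $\statevec{f}^{\pm}-\statevec{f}^*=0$, so both terms of the ansatz vanish. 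For the path-conservation condition \eqref{eq:ec_condition_2} I would add $\statevec{D}^-$ and $\statevec{D}^+$: the conservative contributions collapse to $\jump{\statevec{f}}$, while the matrix parts sum to $(\bar{\statemat{B}}^-+\bar{\statemat{B}}^+)\jump{\statevec{v}}$, which by \eqref{eq:fluctuation_strat2_cond3} together with $\partial_s\Phi=\jump{\statevec{v}}$ for the linear path equals the nonconservative integral. Recognizing that the flux-Jacobian part of $\statemat{A}$ integrates exactly to $\jump{\statevec{f}}$ along any path identifies this sum with the right-hand side of \eqref{eq:ec_condition_2}.

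For the skew condition \eqref{eq:ec_condition_3} I would evaluate $\statevec{D}^+(\statevec{u}_R,\statevec{u}_L)$ by swapping the arguments: the jump $\jump{\statevec{v}}$ flips sign, the symmetry in \eqref{eq:consistency_condition_cons_flux} leaves $\statevec{f}^*$ invariant, and the swap relation \eqref{eq:fluctuation_strat2_cond2} rewrites $\bar{\statemat{B}}^+(\statevec{u}_R,\statevec{u}_L)$ as $\bar{\statemat{B}}^-(\statevec{u}_L,\statevec{u}_R)$, so that adding $\statevec{D}^-(\statevec{u}_L,\statevec{u}_R)$ cancels everything termwise. The entropy conservation condition \eqref{eq:ec_condition_4} then requires no genuine work, since substituting the ansatz into \eqref{eq:ec_condition_4} and collecting the flux terms into $\jump{\statevec{w}^T\statevec{f}}$ and $-\jump{\statevec{w}}^T\statevec{f}^*$ reproduces exactly \eqref{eq:entropy_conservation_condition_strategy_2}, which is assumed to hold.

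The one genuinely computational step, and where I expect to spend the most care, is the high-order consistency \eqref{eq:ec_condition_5}, namely that the Jacobian of $\statevec{D}^-$ in its second argument reduces to $\tfrac12\statemat{A}(\statevec{u}_L)$ on the diagonal. Differentiating the matrix part by the product rule, the term hitting $\bar{\statemat{B}}^-$ drops out because $\jump{\statevec{v}}=0$ there, leaving $\bar{\statemat{B}}^-(\statevec{u}_L,\statevec{u}_L)\,\statevec{v}_{\statevec{u}}(\statevec{u}_L)$; invoking \eqref{eq:fluctuation_strat2_cond1} and the inverse relation $\statevec{u}_{\statevec{v}}\statevec{v}_{\statevec{u}}=\statemat{I}$ collapses this to $\tfrac12\statemat{B}(\statevec{u}_L)$. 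The flux part contributes $\partial_{\statevec{u}_R}\statevec{f}^*|_{\statevec{u}_R=\statevec{u}_L}$, for which I would use the standard fact that a symmetric consistent two-point flux has diagonal Jacobian $\tfrac12\statevec{f}_{\statevec{u}}$, obtained by differentiating $\statevec{f}^*(\statevec{u},\statevec{u})=\statevec{f}(\statevec{u})$ and splitting the two argument-derivatives equally by symmetry. Summing the two contributions gives $\tfrac12(\statevec{f}_{\statevec{u}}+\statemat{B})(\statevec{u}_L)=\tfrac12\statemat{A}(\statevec{u}_L)$, completing the proof. The subtlety worth flagging throughout is the bookkeeping of the change of variables $\statevec{v}(\statevec{u})$: one must track whether the path and the Jacobians $\statevec{u}_{\statevec{v}}$, $\statevec{v}_{\statevec{u}}$ are evaluated in $\statevec{u}$- or $\statevec{v}$-space, since this is precisely where a spurious factor could otherwise creep in.
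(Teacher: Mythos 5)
Your proposal is correct and follows essentially the same route as the paper: conditions \eqref{eq:ec_condition_1}--\eqref{eq:ec_condition_4} hold by construction (which you spell out explicitly, including the correct use of \eqref{eq:fluctuation_strat2_cond2} for the skew condition and the observation that substituting the ansatz into \eqref{eq:ec_condition_4} reproduces \eqref{eq:entropy_conservation_condition_strategy_2}), and the real work is \eqref{eq:ec_condition_5}, which you handle by the same conservative/nonconservative split as the paper, with the product rule killing the $\partial\bar{\statemat{B}}^-$ term via $\jump{\statevec{v}}=0$ and \eqref{eq:fluctuation_strat2_cond1} together with $\statevec{u}_{\statevec{v}}\statevec{v}_{\statevec{u}}=\statemat{I}$ yielding $\tfrac12\statemat{B}(\statevec{u}_L)$. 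The only cosmetic difference is that you derive the identity $\partial_{\statevec{u}_R}\statevec{f}^*|_{\statevec{u}_R=\statevec{u}_L}=\tfrac12\statevec{f}_{\statevec{u}}(\statevec{u}_L)$ elementarily from symmetry and consistency, whereas the paper cites the literature for it.
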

	\begin{proof}
		The properties \eqref{eq:ec_condition_1}, \eqref{eq:ec_condition_2}, \eqref{eq:ec_condition_3}, \eqref{eq:ec_condition_4} follow by construction, so we only need to show \eqref{eq:ec_condition_5}. For this we consider the nonconservative and conservative contributions separately
		\begin{equation}
			\left(\frac{\partial \statevec{D}^{-}(\statevec{u}_L, \statevec{u}_R)}{\partial \statevec{u}_R}\Big|_{\statevec{u}_R = \statevec{u}_L}\right)
			=
			\frac{\partial \bar{\statemat{B}}^{-}(\statevec{u}_L, \statevec{u}_R) \jump{\statevec{v}}}{\partial \statevec{u}_R}\Big|_{\statevec{u}_R = \statevec{u}_L}
			+
			\frac{\partial \statevec{f}^*\left(\statevec{u}_L, \statevec{u}_R\right)}{\partial \statevec{u}_R}\Big|_{\statevec{u}_R = \statevec{u}_L}.
		\end{equation}
		For the conservative contribution it was shown in \cite{ranocha2018generalised} that 
		\begin{equation}
			\frac{\partial \statevec{f}^*\left(\statevec{u}_L, \statevec{u}_R\right)}{\partial \statevec{u}_R}\Big|_{\statevec{u}_R = \statevec{u}_L} 
			=
			\frac{1}{2} \frac{\partial \statevec{f}(\statevec{u})}{\partial \statevec{u}}\Big|_{\statevec{u} = \statevec{u}_L}
			\label{eq:accuracy_conservative_part}
		\end{equation}
		
		For the nonconservative part we evaluate the Jacobian element-wise, by applying the product rule and using the consistency \eqref{eq:fluctuation_strat2_cond1} to obtain
		\begin{equation}
			\begin{aligned}
			\frac{\partial \left(\bar{\statemat{B}}^{-}(\statevec{u}_L, \statevec{u}_R) \jump{\statevec{v}}\right)_{(i)}}{\partial {\statevec{u}_R}_{(j)}}\Big|_{\statevec{u}_R = \statevec{u}_L}
			&=
			\frac{\partial}{\partial {\statevec{u}_R}_{(j)}} \left(\sum\limits_{k=1}^n \bar{\statemat{B}}^-_{ik}(\statevec{u}_L, \statevec{u}_R)\left( {\statevec{v}_R}_{(k)} - {\statevec{v}_L}_{(k)}\right)\right)\Big|_{\statevec{u}_R = \statevec{u}_L}
			\\&=
			\sum\limits_{k=1}^n \frac{\partial \bar{\statemat{B}}^-_{ik}(\statevec{u}_L, \statevec{u}_R)}{\partial {\statevec{u}_R}_{(j)}}\Big|_{\statevec{u}_R = \statevec{u}_L}  \left( {\statevec{v}_L}_{(k)} - {\statevec{v}_L}_{(k)}\right)
			+
			\bar{\statemat{B}}_{ij}^-(\statevec{u}_L, \statevec{u}_L)
			\frac{\partial \statevec{v}_{(k)}(\statevec{u}_R)}{\partial {\statevec{u}_R}_{(j)}}\Bigg|_{\statevec{u}_R = \statevec{u}_L}
			\\&=
			\frac{1}{2}\sum\limits_{l=1}^n\statemat{B}_{il}(\statevec{u}_L)\sum\limits_{k=1}^n(\statevec{u}_{\statevec{v}}(\statevec{v}(\statevec{u}_L)))_{lk} (\statevec{v}_{\statevec{u}}(\statevec{u}_L))_{kj}
			\\&=
			\frac{1}{2} \sum\limits_{l=1}^n 	\statemat{B}_{il}(\statevec{u}_L)\delta_{lj}
			=
			\frac{1}{2} \statemat{B}_{ij}(\statevec{u}_L).
			\end{aligned}
			\label{eq:accuracy_noncons_part}
		\end{equation}
		Finally combining the contributions \eqref{eq:accuracy_conservative_part} and \eqref{eq:accuracy_noncons_part} yields the desired result
		\begin{equation}
			\left(\frac{\partial \statevec{D}^{-}(\statevec{u}_L, \statevec{u}_R)}{\partial \statevec{u}_R}\Big|_{\statevec{u}_R = \statevec{u}_L}\right)
			=
			\frac{1}{2}\left(\statemat{B}(\statevec{u}) + \statevec{f}(\statevec{u})\right)\Big|_{\statevec{u} = \statevec{u}_L}
			=
			\frac{1}{2}\statemat{A}(\statevec{u}_L) .
		\end{equation}
			\qed
	\end{proof}

\subsection{Entropy stable DGSEM}
	While the entropy conservation law applies for smooth solutions, at discontinuities entropy should be dissipated.
	As such we want to design a method that is entropy stable in the sense that it recovers an integral version of the entropy inequality \eqref{eq:entropy_inequality} in the semi-discrete case.
	
	To achieve this, we use the entropy conservative DGSEM developed in the previous section as a baseline scheme to which we add dissipation at element interfaces to recover an entropy inequality.
	This is achieved by replacing the entropy conservative interface fluctuations in  \eqref{eq:flux_diff_DGSEM}, with fluctuations that are entropy stable.
	\begin{definition}
		A fluctuation is defined as entropy stable $\statevec{D}_{ES}^{\pm}$ if it satisfies
		\begin{equation}
			\statevec{D}^{\pm}_{ES}(\statevec{u}_L, \statevec{u}_R) = \statevec{D}^{\pm}_{EC}(\statevec{u}_L, \statevec{u}_R) \pm \statemat{Q}(\statevec{u}_L, \statevec{u}_R)\jump{\statevec{u}},
		\end{equation}
		where $\statevec{D}^{\pm}_{EC}$ is an entropy conservative fluctuations according to Definition \ref{def:ec_fluctuation} and $\statemat{Q}: \stateset \times \stateset \rightarrow \mathbb{R}^{n \times n}$ is a numerical viscosity matrix satisfying the inequality
		\begin{equation}
			\jump{\statevec{w}}^T \statemat{Q}(\statevec{u}_L, \statevec{u}_R) \jump{\statevec{u}} \geq 0.
			\label{eq:es_dissipation_inequality}
		\end{equation}
	\end{definition}
	
	\begin{theorem}\label{thm:es_dgsem}
		The nonconservative DGSEM in flux differencing formulation 
		\begin{equation*}
			\omega_i \frac{\Delta x_k}{2} \frac{\partial \statevec{U}_i^k}{\partial t} 
			+
			\omega_i \sum\limits_{m=0}^N 2\mathcal{D}_{im} \statevec{D}^-_{EC}(\statevec{U}_i^k,\statevec{U}_m^k)
			+ \delta_{i0}\statevec{D}_{ES}^+(\statevec{U}_N^{k-1}, \statevec{U}_0^k)
			+ \delta_{iN}\statevec{D}^-_{ES}(\statevec{U}_N^{k}. \statevec{U}_0^{k+1}) = 0,
		\end{equation*}
		with entropy conservative volume fluctuations $\statevec{D}^{\pm}_{EC}$ and entropy stable interface fluctuations $\statevec{D}^{\pm}_{ES}$ recovers the semi-discrete entropy inequality at the element level
		\begin{equation}
			\sum\limits_{i=0}^N \omega_i\frac{\Delta x_k}{2} \frac{\partial S(\statevec{U}_i^k)}{\partial t} 
			\leq
			Q(\statevec{U}_N^{k-1}, \statevec{U}_0^k)
			-Q(\statevec{U}_N^k, \statevec{U}_0^{k+1}),
		\end{equation}
		where $Q$ denotes a numerical entropy flux, consistent with the physical entropy flux $Q(\statevec{u}, \statevec{u}) = q(\statevec{u})$.
	\end{theorem}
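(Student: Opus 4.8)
The plan is to build directly on Lemma~\ref{lemma:ec_dgsem}, treating the entropy stable scheme as the entropy conservative scheme plus an interface dissipation contribution. First I would repeat the opening steps of the proof of Lemma~\ref{lemma:ec_dgsem} verbatim: multiply the flux differencing formulation by the nodal entropy variables $\statevec{W}_i^k$, sum over $i$, and invoke the chain rule in time. Since the volume fluctuations are unchanged (still $\statevec{D}^-_{EC}$), the entire volume term telescopes exactly as before into the boundary entropy potential $q_N - q_0$, so no new volume contributions arise and the only difference from the entropy conservative case sits in the two interface terms.

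Next I would insert the decomposition $\statevec{D}^{\pm}_{ES} = \statevec{D}^{\pm}_{EC} \pm \statemat{Q}\jump{\statevec{u}}$ into the interface contributions at nodes $i=0$ and $i=N$. The $\statevec{D}^{\pm}_{EC}$ parts recombine with the telescoped volume term to reproduce precisely the entropy conservative balance of Lemma~\ref{lemma:ec_dgsem}, whose right-hand side is the difference of the numerical entropy fluxes $Q_{EC}$. Writing $\statemat{Q}_k := \statemat{Q}(\statevec{U}_N^k, \statevec{U}_0^{k+1})$ and $\jump{\statevec{U}}_{k} := \statevec{U}_0^{k+1}-\statevec{U}_N^{k}$, the remaining pieces are the two dissipation contributions, which at the left and right interfaces of element $\Omega^k$ read $-{\statevec{W}_0^k}^T\statemat{Q}_{k-1}\jump{\statevec{U}}_{k-1}$ and $+{\statevec{W}_N^k}^T\statemat{Q}_{k}\jump{\statevec{U}}_{k}$, respectively.

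The crucial, and only nontrivial, step is that each of these contributions involves a single one-sided entropy variable rather than a jump, so neither is sign-definite on its own and neither can be controlled directly by the dissipation inequality~\eqref{eq:es_dissipation_inequality}. The resolution is the algebraic splitting ${\statevec{W}_N^k} = \avg{\statevec{W}}_k - \tfrac{1}{2}\jump{\statevec{W}}_k$ and ${\statevec{W}_0^k} = \avg{\statevec{W}}_{k-1} + \tfrac{1}{2}\jump{\statevec{W}}_{k-1}$ applied at the respective interface. Substituting these, the $\avg{\statevec{W}}$ parts are absorbed into a redefined numerical entropy flux
\begin{equation*}
  Q(\statevec{u}_L,\statevec{u}_R) = Q_{EC}(\statevec{u}_L,\statevec{u}_R) - \avg{\statevec{w}}^T\statemat{Q}(\statevec{u}_L,\statevec{u}_R)\jump{\statevec{u}},
\end{equation*}
which remains consistent because $\jump{\statevec{u}}$ vanishes when $\statevec{u}_L=\statevec{u}_R$, giving $Q(\statevec{u},\statevec{u})=q(\statevec{u})$. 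The leftover $\jump{\statevec{W}}$ parts collapse to $-\tfrac{1}{2}\jump{\statevec{W}}_{k-1}^T\statemat{Q}_{k-1}\jump{\statevec{U}}_{k-1}$ and $-\tfrac{1}{2}\jump{\statevec{W}}_{k}^T\statemat{Q}_{k}\jump{\statevec{U}}_{k}$, each of which is nonpositive by~\eqref{eq:es_dissipation_inequality}.

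Collecting everything, the element balance becomes an exact equality of the form (telescoping flux with the new $Q$) minus the two nonpositive dissipation remainders. Dropping these two nonpositive terms converts the equality into the claimed element-level entropy inequality with the consistent numerical entropy flux $Q$, completing the proof. I expect the main obstacle to be exactly the average/jump regrouping of the one-sided interface terms: one must recognize that sign-definiteness is only available in the jump $\jump{\statevec{W}}$ direction and that the orthogonal $\avg{\statevec{W}}$ component must be reinterpreted as part of the numerical entropy flux rather than bounded, with the verification of consistency of the redefined $Q$ being the natural sanity check.
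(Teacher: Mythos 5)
Your proposal is correct and takes essentially the same route as the paper's proof: both start from the element balance of Lemma~\ref{lemma:ec_dgsem}, insert $\statevec{D}^{\pm}_{ES} = \statevec{D}^{\pm}_{EC} \pm \statemat{Q}\jump{\statevec{u}}$, split the one-sided interface entropy variables via $(\cdot)_L = \avg{\cdot} - \tfrac{1}{2}\jump{\cdot}$ and $(\cdot)_R = \avg{\cdot} + \tfrac{1}{2}\jump{\cdot}$, absorb the average contributions into the numerical entropy flux, and drop the nonpositive jump remainders $-\tfrac{1}{2}\jump{\statevec{w}}^T\statemat{Q}\jump{\statevec{u}}$ using \eqref{eq:es_dissipation_inequality}. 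Your redefined flux $Q = Q_{EC} - \avg{\statevec{w}}^T\statemat{Q}\jump{\statevec{u}}$ is precisely the paper's correction term $-\tfrac{1}{2}\left(\statevec{W}_N^{k-1} + \statevec{W}_0^{k}\right)^T\statemat{Q}\jump{\statevec{U}}$, and your consistency check matches as well.
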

	\begin{proof}
		We begin with the result from Lemma \ref{lemma:ec_dgsem} and use the relations $(\cdot)_L = \avg{\cdot} - \frac{1}{2}\jump{\cdot}$,  $(\cdot)_R = \avg{\cdot} + \frac{1}{2}\jump{\cdot}$ and \eqref{eq:es_dissipation_inequality} to obtain
		\begin{equation}
			\begin{aligned}
				\sum\limits_{i=0}^N \omega_i\frac{\Delta x_k}{2} \frac{\partial S(\statevec{U}_i^k)}{\partial t} 
				&= 
				\left(q_0^k - {\statevec{W}_0^k}^T\statevec{D}^+_{ES}(\statevec{U}_N^{k-1}, \statevec{U}_0^k)\right)
				-\left(q_N^k + {\statevec{W}_N^k}^T\statevec{D}^-_{ES}(\statevec{U}_N^{k}, \statevec{U}_0^{k+1})\right)
				\\
				&=
				\left(q_0^k 
				- {\statevec{W}_0^k}^T\statevec{D}^+_{EC}(\statevec{U}_N^{k-1}, \statevec{U}_0^k)
				-
				{\statevec{W}_0^k}^T\statemat{Q}(\statevec{U}_N^{k-1}, \statevec{U}_0^k)(\statevec{U}_0^{k} - \statevec{U}_N^{k-1})\right)
				\\
				&-\left(q_N^k 
				+ {\statevec{W}_N^k}^T\statevec{D}^-_{EC}(\statevec{U}_N^{k}, \statevec{U}_0^{k+1})
				-
				{\statevec{W}_N^k}^T\statemat{Q}(\statevec{U}_N^{k}, \statevec{U}_0^{k+1})(\statevec{U}_0^{k+1} - \statevec{U}_N^{k})\right)
				\\
				&=
				\left(q_0^k 
				- {\statevec{W}_0^k}^T\statevec{D}^+_{EC}(\statevec{U}_N^{k-1}, \statevec{U}_0^k)
				-
				\left({\statevec{W}_N^{k-1}} + \statevec{W}_0^{k}\right)^T\frac{1}{2}\statemat{Q}(\statevec{U}_N^{k-1}, \statevec{U}_0^k)(\statevec{U}_0^{k} - \statevec{U}_N^{k-1})\right)
				\\
				&-\left(q_N^k 
				+ {\statevec{W}_N^k}^T\statevec{D}^-_{EC}(\statevec{U}_N^{k}, \statevec{U}_0^{k+1})
				-
				\left({\statevec{W}_N^k} + \statevec{W}_0^{k+1}\right)^T\frac{1}{2}\statemat{Q}(\statevec{U}_N^{k}, \statevec{U}_0^{k+1})(\statevec{U}_0^{k+1} - \statevec{U}_N^{k})\right)
				\\
				&-
				\left({\statevec{W}_0^k} - \statevec{W}_N^{k-1}\right)^T\frac{1}{2}\statemat{Q}(\statevec{U}_N^{k-1}, \statevec{U}_0^k)(\statevec{U}_0^{k} - \statevec{U}_N^{k-1})
				-
				\left({\statevec{W}_0^{k+1}} - \statevec{W}_N^{k}\right)^T\frac{1}{2}\statemat{Q}(\statevec{U}_N^{k}, \statevec{U}_0^{k+1})(\statevec{U}_0^{k+1} - \statevec{U}_N^{k})
				\\&\leq
				Q(\statevec{U}_N^{k-1}, \statevec{U}_0^k)
				-Q(\statevec{U}_N^k, \statevec{U}_0^{k+1}),
			\end{aligned}
		\end{equation}
		with numerical entropy fluxes
		\begin{equation}
			\begin{aligned}
			Q(\statevec{U}_N^{k-1}, \statevec{U}_0^k)
			&=
			\left(q_0^k 
			+ {\statevec{W}_0^k}^T\statevec{D}^+_{EC}(\statevec{U}_N^{k-1}, \statevec{U}_0^k)
			-
			\left({\statevec{W}_N^{k-1}} + \statevec{W}_0^{k}\right)^T\frac{1}{2}\statemat{Q}(\statevec{U}_N^{k-1}, \statevec{U}_0^k)(\statevec{U}_0^{k} - \statevec{U}_N^{k-1})\right)
			\\&=
			\left(q_N^{k-1} 
			- {\statevec{W}_N^{k-1}}^T\statevec{D}^-_{EC}(\statevec{U}_N^{k-1}, \statevec{U}_0^{k})
			-
			\left({\statevec{W}_N^{k-1}} + \statevec{W}_0^{k}\right)^T\frac{1}{2}\statemat{Q}(\statevec{U}_N^{k-1}, \statevec{U}_0^k)(\statevec{U}_0^{k} - \statevec{U}_N^{k-1})\right).
			\end{aligned}
		\end{equation}
		\qed
	\end{proof}
	
    \begin{lemma}
    	Let $S(\statevec{u})$ be a convex entropy function, with respective entropy variables $\statevec{w} = S_{\statevec{u}}$, then for any $\statevec{u}_L, \statevec{u}_R \in \stateset$ the local Lax-Friedrichs (LLF) type dissipation given by
    	\begin{equation}
    		\statemat{Q}_{llf}(\statevec{u}_L, \statevec{u}_R) = \frac{1}{2}|\lambda|_{\max}\statemat{I}
    	\end{equation}
    	dissipates entropy as it satisfies the inequality \eqref{eq:es_dissipation_inequality}.
    \end{lemma}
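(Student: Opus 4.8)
The plan is to reduce the matrix inequality \eqref{eq:es_dissipation_inequality} to a scalar statement about the convexity of $S$. Since $\statemat{Q}_{llf}(\statevec{u}_L,\statevec{u}_R) = \frac{1}{2}|\lambda|_{\max}\statemat{I}$ is merely a nonnegative multiple of the identity, substituting it directly into the left-hand side of \eqref{eq:es_dissipation_inequality} gives
\begin{equation*}
	\jump{\statevec{w}}^T \statemat{Q}_{llf}(\statevec{u}_L, \statevec{u}_R) \jump{\statevec{u}} = \frac{1}{2}|\lambda|_{\max}\, \jump{\statevec{w}}^T\jump{\statevec{u}},
\end{equation*}
so that, since the maximum wave speed estimate satisfies $|\lambda|_{\max} \geq 0$, it remains only to establish $\jump{\statevec{w}}^T\jump{\statevec{u}} \geq 0$.

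The key observation I would use is that the entropy variables are the gradient of the convex entropy, $\statevec{w} = S_{\statevec{u}}$, so the Jacobian $\statevec{w}_{\statevec{u}} = S_{\statevec{u}\statevec{u}}$ is the Hessian of $S$, which is symmetric positive definite by strict convexity (as noted following \eqref{eq:entropy_flux_compatibility_relation}). This makes the change of variables $\statevec{u} \mapsto \statevec{w}(\statevec{u})$ monotone, which is precisely what yields the required sign.

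To make this precise I would introduce the straight-line path $\statevec{u}(s) = \statevec{u}_L + s\jump{\statevec{u}}$ for $s \in [0,1]$ and write the jump in entropy variables as a path integral of the Hessian,
\begin{equation*}
	\jump{\statevec{w}} = \statevec{w}(\statevec{u}_R) - \statevec{w}(\statevec{u}_L) = \int_0^1 S_{\statevec{u}\statevec{u}}(\statevec{u}(s))\, \jump{\statevec{u}} \,\text{d}s.
\end{equation*}
Contracting with $\jump{\statevec{u}}$ then gives
\begin{equation*}
	\jump{\statevec{w}}^T\jump{\statevec{u}} = \int_0^1 \jump{\statevec{u}}^T S_{\statevec{u}\statevec{u}}(\statevec{u}(s))\, \jump{\statevec{u}} \,\text{d}s \geq 0,
\end{equation*}
since the integrand is nonnegative for each $s$ because $S_{\statevec{u}\statevec{u}}$ is symmetric positive definite along the whole path. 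Combining this with $|\lambda|_{\max} \geq 0$ establishes \eqref{eq:es_dissipation_inequality}, so $\statemat{Q}_{llf}$ dissipates entropy.

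The computation is essentially routine, so I do not expect a genuine obstacle; the only point requiring care is that positive definiteness of the Hessian must hold on the entire segment connecting $\statevec{u}_L$ and $\statevec{u}_R$, which is guaranteed by the global convexity of $S$ on the admissible state space. Notably, no system-specific structure of $\statemat{B}$ or of the flux enters the argument, which is exactly why this local Lax-Friedrichs dissipation serves as a universally applicable choice for the viscosity matrix $\statemat{Q}$.
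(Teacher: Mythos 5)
Your proposal is correct, and the reduction is the same as the paper's: everything hinges on the scalar monotonicity inequality $\jump{\statevec{w}}^T\jump{\statevec{u}} \geq 0$, after pulling out the nonnegative factor $\frac{1}{2}|\lambda|_{\max}$. Where you diverge is in how that inequality is established. The paper uses the first-order characterization of convexity: it writes the two supporting-hyperplane inequalities $S(\statevec{u}_R) \geq S(\statevec{u}_L) + \nabla S(\statevec{u}_L)^T(\statevec{u}_R - \statevec{u}_L)$ and its mirror image, adds them, and reads off $\jump{\statevec{w}}^T\jump{\statevec{u}} \geq 0$ directly. You instead use the second-order characterization, writing $\jump{\statevec{w}}$ as a line integral of the Hessian $S_{\statevec{u}\statevec{u}}$ along the straight segment from $\statevec{u}_L$ to $\statevec{u}_R$ and contracting with $\jump{\statevec{u}}$. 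Both arguments prove monotonicity of the gradient map of a convex function; the trade-off is that the paper's version needs only differentiability of $S$ (no Hessian, no path integral, no requirement on where the segment lives beyond convexity of the domain), while yours additionally assumes $S$ is twice differentiable and that the Hessian is positive (semi)definite along the entire segment — a point you correctly flag, though note the lemma assumes only convexity, so you should weaken ``symmetric positive definite'' to positive semidefinite there, which is all the inequality requires. Your closing observation that no structure of $\statemat{B}$ or the flux enters is also apt: it is exactly why $\statemat{Q}_{llf}$ serves as the universally safe dissipation used in the paper's blending procedure \eqref{eq:ansatz_blended_dissipation}.
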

	\begin{proof}
		From convexity of $S(\statevec{u})$ we obtain the inequalities
		\begin{equation}
			\begin{aligned}
			S(\statevec{u}_R) \geq S(\statevec{u}_L) + \nabla S(\statevec{u}_L)^T(\statevec{u}_R - \statevec{u}_L),\\
			S(\statevec{u}_L) \geq S(\statevec{u}_R) + \nabla S(\statevec{u}_R)^T(\statevec{u}_L - \statevec{u}_R).
			\end{aligned}
		\end{equation}
		Adding both inequalities, and using that $\Delta S(\statevec{u}) = \statevec{w}$, yields
		\begin{equation}
			0 \leq (\nabla S(\statevec{u}_R)^T - \nabla S(\statevec{u}_L)^T)(\statevec{u}_R - \statevec{u}_L) = \jump{\statevec{w}}^T\jump{\statevec{u}} 
		\end{equation}
		\qed
	\end{proof}
    
    For conservative systems, more sophisticated entropy dissipation terms such as entropy stable Roe or HLL type dissipation have been constructed \cite{schmidtmann2017hybrid}.
    However, these formulations rely on the symmetry of $\statemat{A}\,\statemat{H}$ and the related identity, demonstrated in \cite{barth1999numerical}, that for conservative systems the entropy Hessian is a product of scaled eigenvectors of the flux Jacobian.
    As discussed in Section~\ref{sec:continuous_entropy_analysis}, for nonconservative systems the entropy Hessian $\statemat{H}$ no longer acts as a symmetrizer, hence the classical matrix-valued dissipation terms may fail to preserve the entropy inequality.
    On the other hand LLF-type dissipation is known to be overly dissipative and in some instances fails to preserve important steady state solutions.
    
    To overcome these issues, we propose a simple blending procedure to construct entropy stable dissipation.
    The idea is that it is possible to make any matrix-valued dissipation term entropy stable by introducing a convex blending with LLF dissipation.
    Since the LLF dissipation is known to be entropy dissipative, we then blend in just enough of the safe dissipation to ensure \eqref{eq:es_dissipation_inequality} holds.
  
    Assume that we have some numerical viscosity matrix $\statemat{Q}$, that is not necessarily entropy dissipative. 
    Then we introduce the convex blending 
    \begin{equation}
    	\statemat{Q}_{ES} \coloneqq \alpha \statemat{Q}_{llf} + \left(1 - \alpha\right) \statemat{Q}, \quad \alpha \in [0, 1]
    	\label{eq:ansatz_blended_dissipation}
    \end{equation}
    to construct the blended entropy stable numerical viscosity matrix $\statemat{Q}_{ES}$.
    Depending on the blending factor $\alpha$, this procedure recovers either $\statemat{Q}$ for $\alpha = 0$, the entropy dissipative LLF viscosity matrix $\statemat{Q}_{llf}$ for $\alpha = 1$, or any convex combination thereof for $0<\alpha<1$.
    To determine the choice of $\alpha$, we first determine the entropy production of the viscosity matrices
	\begin{equation}
		\Delta S = \jump{\statevec{w}}^T \statemat{Q}\jump{\statevec{u}}, \quad \Delta S_{\!llf} = \jump{\statevec{w}}^T \statemat{Q}_{llf}\jump{\statevec{u}}.
	\end{equation}
	Substituting the ansatz \eqref{eq:ansatz_blended_dissipation} into the inequality \eqref{eq:es_dissipation_inequality} then provides us with the following condition
	\begin{equation}
		\alpha \Delta S_{\!llf} + (1-\alpha) \Delta S \leq 0,
	\end{equation}
	which finally determines our choice of the blending parameter to be
	\begin{equation}\label{eq:definition_alpha}
		\alpha = 
		\begin{cases}
			\max\left(\frac{\Delta S}{|\Delta S_{\!llf} - \Delta S|}, 1\right),  & \text{for } \Delta S > 0, \\
			0 & \text{else.}
		\end{cases}
	\end{equation}

	Note that in the case of entropy production ($\Delta S > 0$), the entropy dissipative property of the LLF dissipation $\Delta S_{\!llf} \leq 0$ ensures a non-zero denominator. However, the denominator still might become arbitrarily close to zero, in which case taking the maximum ensures that we retain the convex combination.
	A pseudocode description of this blending procedure is presented in Algorithm~ \ref{alg:blending_procedure}.
	
	\begin{algorithm}[h]
		\caption{Entropy stable dissipation blending}
		\label{alg:blending_procedure}
		\KwIn{$\statevec{u}_R, \; \statevec{u}_L, \; \statemat{Q}, \; \statemat{Q}_{llf}$}
		\KwOut{$\statemat{Q}_{ES}$}
		
		$\statevec{w}_{L,R} \gets \statevec{w}(\statevec{u}_{L,R})$\;
		$\jump{\statevec{w}} \gets \statevec{w}_R - \statevec{w}_L$\;
		$\Delta S_{\!llf} \gets \jump{\statevec{w}}^T \statemat{Q}_{llf}\jump{\statevec{u}}, \quad \Delta S \gets \jump{\statevec{w}}^T \statemat{Q}\jump{\statevec{u}}$\;
		\uIf{$\Delta S > 0$}{
			$\alpha \gets \max \left(\Delta S \; / \; |\Delta S_{\!llf} - \Delta S|, 1\right)$\;
			$\statemat{Q}_{ES} \gets \alpha \statemat{Q}_{llf} + (1-\alpha) \statemat{Q}$
		}
		\Else{
			$\statemat{Q}_{ES} = \statemat{Q}$
		}
	\end{algorithm}

\section{Saint-Venant-Exner system}\label{sec:SVE_system}
We apply the numerical method developed in Section~\ref{sec:numerical_method} to construct an entropy stable, high-order approximation to the Saint-Venant-Exner (SVE) system.
This system couples a fluid layer modeled by the shallow water equations with the Exner equation for sediment transport.
A shortcoming of the classical SVE system is the absence of an auxiliary entropy function, which prevents the construction of ES discretizations.
This issue was remedied in \cite{fernandez2017formal}, where the model was rederived to introduce a correction term that provides a system with an associated entropy function.
As we aim to design an ES method, we consider the model with correction factor from \cite{fernandez2017formal}
\begin{equation}\label{eq:sve_system}
	\begin{cases}
		\partial_t h + \partial_x hv = 0, \\
		\partial_t hv + \partial_x hv^2 + gh\partial_x (h + b) + \frac{1}{r}gh_b\partial_x (rh + b) + \frac{\tau}{\rho_f} = 0,\\
		\partial_t b + \partial_x q_b = 0,
	\end{cases}
\end{equation}
where $h(x,t)$ denotes the water height, $hv(x,t)$ the momentum, $b(x,t)$ the sediment height, $g$ is the gravitational acceleration, and $r = \rho_f / \rho_s$ is the ratio between fluid $\rho_f$, and sediment density $\rho_s$.
Furthermore, $\tau$ is the shear stress at the water-sediment interface, defined by an empirical friction law. In this work, we consider the Manning friction term
\begin{equation}
	\tau = \frac{\rho_f g n^2 v|v|}{h^{1/3}},
\end{equation}
with Manning coefficient $n$.
The sediment discharge, denoted $q_b(h, hv)$, is specified through empirical sediment discharge formulas. For convenience, we define the active sediment height $h_b \coloneqq \frac{q_b}{v}$.
While a vast number of discharge formulas have been proposed in the literature, we focus the discussion on two widely used models from Grass \cite{grass1981sediment} and Meyer-Peter and Müller (MPM) \cite{meyer1948formulas}.
The Grass model assumes a power law relationship with the velocity
\begin{equation}\label{eq:grass_formula}
	\frac{q_b}{\vartheta} = A_g v^3,
\end{equation}
where $A_g \in [0,1]$ is an empirical coefficient characterizing the sediment properties that is estimated from experimental data and $\vartheta = 1 / (1 - \varphi)$ is defined by the sediment porosity $\varphi$. Due to the direct relationship with the velocity a core assumption of the Grass model is that sediment transport begins immediately for non-zero velocities.
This unphysical behavior has been improved in other formulas, like MPM, that restrict sediment transport when the shear stress falls below a certain threshold
\begin{equation}\label{eq:mpm_formula}
	\frac{q_b}{\vartheta} = \text{sgn}(\tau) 8(\theta - \theta_c)^\frac{3}{2}_+ \sqrt{g \left(\frac{1}{r} - 1\right) d_s^3},
\end{equation}
with the dimensionless shields parameter $\theta = \frac{\tau}{(\rho_s - \rho_f)gd_s}$, the critical shields parameter $\theta_c$ (typically set to $\theta_c = 0.047$) and the sediment grain diameter $d_s$.

Analogous to the shallow water system, the SVE system admits steady-state solutions that satisfy $\statemat{A}\statevec{u}_x = 0$, with the generalized Jacobian
\begin{equation}
	\statemat{A}(\statevec{u}) = \begin{pmatrix}
		0 & 1 & 0 \\ g(h + h_b)- v^2 & 2v & g(h+\frac{1}{r}h_b) \\ \frac{\partial q_b}{\partial h} & \frac{\partial q_b}{\partial hv} & 0
	\end{pmatrix}. 
\end{equation}
One particularly important example is the lake-at-rest steady state
\begin{equation}
	\mathcal{U}_{wb}\coloneqq \{\statevec{u}(x, t) : h(x,t) + b(x,t) = H_0, \, hv(x,t) = 0, \quad t \geq 0\},
	\label{eq:lake-at-rest conditions}
\end{equation}
for some constant water level $H_0 \in \mathbb{R}$.
Preserving the lake-at-rest steady state, commonly referred to as the well-balanced property, is crucial for numerical methods as violations of the well-balanced property lead to unphysical oscillations on the order of the mesh spacing \cite{chertock2018well}.

The system \eqref{eq:sve_system} also admits an additional conservation law in the form of the entropy–entropy flux pair
\begin{equation}
	\begin{aligned}
		S(\statevec{u}) = \frac{1}{2}rhv^2 + \frac{1}{2}g(rh^2 + b^2) + rghb,
		\quad
		q(\statevec{u}) = rhv\left(\frac{v^2}{2} + g(h + b)\right) + gq_b(rh+b).
	\end{aligned}
\end{equation}
Differentiating the entropy function with respect to the solution variables $\statevec{u}$ yields the entropy variables
\begin{equation}
	\statevec{w}(\statevec{u}) = \begin{pmatrix}
		r(g(h + b) - \frac{v^2}{2}) \\
		rv\\
		g(rh+b)
	\end{pmatrix},
\end{equation}
which are key for the entropy analysis. The existence of such an associated entropy pair is important to ensure physically consistent solutions and nonlinear stability.
In the following section we develop a numerical method that preserves both continuous invariants in the semi-discrete case.

\subsection{Numerical method}
We apply the nonconservative DGSEM with flux differencing formulation introduced in Section~\ref{sec:numerical_method} to construct a high-order, entropy stable, and well-balanced numerical approximation of the SVE system.
From Theorem~\ref{thm:es_dgsem}, these properties immediately with EC fluctuations for the volume terms and ES fluctuations at element interfaces. 
We apply the strategy described in Theorem~\ref{thm:ec_fluxes_strat_2} to derive such EC fluctuations as a baseline and then formulate suitable dissipation terms to obtain the ES fluctuations.

To construct the EC fluctuations, the system is split into conservative and nonconservative components
\begin{equation}
	\statevec{f}(\statevec{u}) = \begin{pmatrix}
		hv \\ hv^2 \\ q_b
		\end{pmatrix}
	,\quad
	\statemat{B}(\statevec{u}) = \begin{pmatrix}
		0 & 0 & 0 \\ g(h + h_b) & 0 & g(h + \frac{1}{r} h_b) \\ 0 & 0 & 0
	\end{pmatrix}.
\end{equation}
Next, to determine the nonconservative part of the fluctuation, the path integral in \eqref{eq:fluctuation_strat2_cond3} must be evaluated in closed form.
This is complicated, however, by the fact that $h_b(\statevec{u})$ may depend on both the friction law and sediment discharge formulation, and is generally a highly nonlinear term.
Nevertheless, we can simplify the evaluation of the path integral by introducing the auxiliary variables
\begin{equation}
	\statevec{v}(\statevec{u}) := 
	\begin{pmatrix} h \\ h_b \\ b \end{pmatrix},
	\quad
	\statemat{B}(\statevec{v}) = 
	\begin{pmatrix}
		0 & 0 & 0 \\ g(h + h_b) & 0 & g(h + \frac{1}{r} h_b) \\ 0 & 0 & 0
	\end{pmatrix}
	,\quad
	\statevec{u}_{\statevec{v}} =
	\begin{pmatrix}
		1 & 0 & 0 \\
		-\frac{\partial h_b}{\partial h} / \frac{\partial h_b}{\partial hv} & 1 / \frac{\partial h_b}{\partial hv} & 0 \\
		0 & 0 & 1
	\end{pmatrix}.
\end{equation}
We find, assuming a linear path with respect to $\statevec{v}$, that
\begin{equation}
	\bar{\statemat{B}}^{-}(\statevec{u}_L, \statevec{u}_R) 
	+
	\bar{\statemat{B}}^{+}(\statevec{u}_L, \statevec{u}_R) 
	=
	\int_0^1 \statemat{B}(\statevec{v})\statevec{u}_{\statevec{v}}\text{d}s
	=
	\begin{pmatrix}
		0 & 0 & 0 \\ g(\avg{h} + \avg{h_b}) & 0 & g(\avg{h} + \frac{1}{r}\avg{h_b}) \\ 0 & 0 & 0
	\end{pmatrix}.
\end{equation}
To apply Theorem~\ref{thm:ec_fluxes_strat_2} and derive an EC fluctuation, we must determine components $\statevec{f}^*$ and $\bar{\statemat{B}}^{\pm}$ such that they satisfy the entropy conservation condition \eqref{eq:entropy_conservation_condition_strategy_2}. Noting that this procedure does not determine a unique solution, we present the admissible choice
\begin{equation}
	\statevec{D}_{EC}^{\pm} = \bar{\statemat{B}}^{\pm}\jump{\statevec{v}} \pm (\statevec{f}^{\pm} - \statevec{f}^*),
	\quad
	\statevec{f}^* = 
	\begin{pmatrix} \avg{hv} \\ \avg{hv}\avg{v} \\ \avg{q_b}
	\end{pmatrix},
	\quad \bar{\statemat{B}}^- = \frac{1}{2}\statemat{B}(\statevec{u}_L),
	\quad
	\bar{\statemat{B}}^+ = \frac{1}{2}\statemat{B}(\statevec{u}_R).
	\label{eq:ec_fluctuation_sve}
\end{equation}
Note, that this formulation does not assume a specific sediment transport formula.
Therefore, it provides EC fluctuations for any formula that is written in terms of the sediment discharge.

From the EC fluctuation, we construct an ES fluctuation by adding a suitable dissipation term that satisfies \eqref{eq:es_dissipation_inequality} while preserving the lake-at-rest steady state.
Although LLF dissipation guarantees entropy stability, it is known to violate the lake-at-rest equilibrium and, therefore, does not meet our requirements.
Conversely, Roe-type dissipation has been applied successfully for the SVE system in \cite{diaz2009two, hudson2001numerical, cordier2011bedload, carraro2018efficient}, but it may lead to entropy-violating solutions \cite{barth1999numerical}.
Entropy stable variants of Roe dissipation have been developed for conservative systems \cite{schmidtmann2017hybrid}, but rely on entropy symmetrization.
Hence, this strategy cannot be applied for nonconservative systems.
As a remedy, we introduce a blended dissipation term that combines Roe and LLF dissipation following the blending procedure described in Section~{\ref{sec:numerical_method} to obtain a dissipation term that respects both entropy stability and well-balancedness.

To build the blended dissipation term, we introduce the Roe dissipation defined by the viscosity matrix
$\statemat{Q}_{roe} := \frac{1}{2}|\statemat{A}_{roe}| = \frac{1}{2}\statemat{R}|\statemat{\Lambda}|\statemat{R}^{-1}$ 
where $\statemat{R}$ is the matrix of right eigenvectors and $|\statemat{\Lambda}| := \text{diag}(|\lambda_1|, ..., |\lambda_m|)$ the absolute value matrix of the eigenvalues corresponding to the eigendecomposition of the Roe matrix $\statemat{A}_{roe} := \statemat{A}(\tilde{\statevec{u}})$. 
Since exact Roe averages depend on the specific sediment and friction models, obtaining a general closed-form solution is infeasible.
Therefore, we use the approximate Roe averages
\begin{equation}
	\begin{aligned}
		\tilde{\statevec{u}}(\statevec{u}_L, \statevec{u}_R) \approx \left(\avg{h}, 
		\frac{\sqrt{h_L}v_L + \sqrt{h_R}v_R}{\sqrt{h_L} + \sqrt{h_R}},
		\avg{b}\right)^T.
	\end{aligned}
\end{equation}

We compute $|\statemat{A}_{roe}|$ following the procedure from \cite{carraro2018efficient} where the eigenvalues of $\statemat{A}_{roe}$ are determined with Cardano's formula.
To compute the derivatives of the sediment discharge, we use automatic differentiation.
Then eigenvectors are recovered as solutions to the eigenvalue problem $\statemat{A}_{roe}\statevec{r}_i = \lambda_i \statevec{r}_i$.
This yields right eigenvectors
\begin{equation}
	\statemat{R} = \begin{pmatrix}
		\statevec{r}_1, \statevec{r}_2, \statevec{r_3}
	\end{pmatrix},
	\quad
	\statevec{r}_i = \begin{pmatrix}
		1, & \lambda_i, & \frac{(\tilde{v} - \lambda_i)^2 - c_1}{c_2}
	\end{pmatrix}^T.
\end{equation}
Analogously, one obtains the inverse matrix
\begin{equation}
	\statemat{R}^{-1} = \begin{pmatrix}
		\statevec{l}_1, \statevec{l}_2, \statevec{l_3}
	\end{pmatrix}^T,
	\quad
	\statevec{l}_i = \begin{pmatrix}
		\dfrac{c_1 - \tilde{v}^2 + \lambda_j \lambda_k}{(\lambda_i - \lambda_j)(\lambda_i - \lambda_k)},&
		\dfrac{2 \tilde{v} - \lambda_j - \lambda_k}{(\lambda_i - \lambda_j)(\lambda_i - \lambda_k)},&
		\dfrac{c_2}{(\lambda_i - \lambda_j)(\lambda_i - \lambda_k)}
	\end{pmatrix},
\end{equation}
with coefficients $c_1 = g (\tilde{h} + q_b(\tilde{\statevec{u}}))$, $c_2 = g (\tilde{h} + \frac{q_b(\tilde{\statevec{u}})}{r})$ and cyclic indices $i,j,k$.


Finally, the ES viscosity matrix is obtained through the blending
\begin{equation}
	\statemat{Q}_{ES-roe} = \alpha \statemat{Q}_{llf} + (1-\alpha) \statemat{Q}_{roe},
\end{equation}
where $\alpha \in [0,1]$ is a blending parameter determined by \eqref{eq:definition_alpha}, which controls the contribution of each dissipation operator.
The corresponding ES fluctuations are
\begin{equation}\label{eq:es_roe_fluctuations}
	\statevec{D}^{\pm}_{ES} = \statevec{D}^{\pm}_{EC} \pm \statemat{Q}_{ES-roe}\jump{\statevec{u}}.
\end{equation}

\subsection{Well-balancedness}
To verify that the scheme preserves the lake-at-rest equilibrium, we evaluate both the EC fluctuations \eqref{eq:ec_fluctuation_sve} and ES fluctuations \eqref{eq:es_roe_fluctuations} under lake-at-rest conditions \eqref{eq:lake-at-rest conditions}.
We first observe that \eqref{eq:ec_fluctuation_sve} vanishes directly when evaluated for the lake-at-rest steady state
\begin{equation}
	 \statevec{D}^{\pm}_{EC}(\statevec{u}_L, \statevec{u}_R) = h_{(L,\,R)}\jump{h+b} = 0, \quad \forall \statevec{u}_L, \statevec{u}_R \in \mathcal{U}_{wb},
\end{equation}
which demonstrates that the EC fluctuations exactly preserve the steady-state solution.
What remains, is to verify that the blended dissipation term $\statemat{Q}_{ES}\jump{\statevec{u}}$ preserves this steady-state as well. 
For this, we consider the Roe matrix evaluated under lake-at-rest conditions.
Assuming that both the active sediment height $h_b$ and the sediment discharge $q_b$ vanish with the velocity, the Roe matrix simplifies to be
	\begin{equation}
	\statemat{A}_{roe}(\statevec{u}_L, \statevec{u}_R) = \begin{pmatrix}
		0 & 1 & 0 \\ g\tilde{h} & 0 & g\tilde{h} \\ 0 & 0 & 0
	\end{pmatrix}, \quad
	\forall \statevec{u}_L, \statevec{u}_R \in \mathcal{U}_{wb}.
\end{equation}
Next, we introduce the matrix sign function $\text{sgn}(\statemat{\Lambda}) = \text{diag}(\text{sgn}(\lambda_1), ..., \text{sgn}(\lambda_m))$ and define $\text{sgn}(\statemat{A}) := \statemat{R}\text{sgn}(\statemat{\Lambda})\statemat{R}^{-1}$ to rewrite the dissipation term and obtain
\begin{equation}
	|\statemat{A}_{roe}|\jump{\statevec{u}} = 
	\text{sgn}(\statemat{A}_{roe})\statemat{A}_{roe}\jump{\statevec{u}} =
	\text{sgn}(\statemat{A}_{roe})\tilde{h}\jump{h + b} =  0, \quad \forall \statevec{u}_L, \statevec{u}_R \in \mathcal{U}_{wb}.
\end{equation}
Thus, the Roe dissipation term $\statemat{Q}_{roe}\jump{\statevec{u}}$ preserves the lake-at-rest steady state.
Well-balancedness for $\statevec{D}^{\pm}_{ES}$ then follows directly, as the entropy production  $\jump{\statevec{w}}^T\statemat{Q}_{roe}\jump{\statevec{u}}$ vanishes and consequently no LLF dissipation is introduced.

\section{Results}\label{sec:results}
We present numerical results to confirm the theoretical properties established in Section~\ref{sec:numerical_method} that the DGSEM with flux differencing achieves high-order accuracy, entropy stability, and well-balancedness when applied to the nonconservative SVE system from Section~\ref{sec:SVE_system}.

We validate and assess performance for EC fluctuations constructed from the two strategies outlined in Section~\ref{sec:numerical_method}.
For this we consider EC fluctuations $\statevec{D}_{EC,1}^{\pm}$ constructed from \eqref{eq:definition_ec_flux_strategy_1} using $N$-point Legendre-Gauss quadrature for the path integrals and the closed-form EC fluctuation $\statevec{D}_{EC,2}^{\pm}$ from \eqref{eq:ec_fluctuation_sve}.

Numerical results are obtained using the solver framework provided by Trixi.jl~\cite{schlottkelakemper2021purely} and TrixiShallowWater.jl~\cite{winters2025trixi} for the semi-discretization, using a 3rd order ESDIRK method \cite{kennedy2001additive} with adaptive time stepping from DifferentialEquations.jl~\cite{rackauckas2017differentialequations} for time integration.
Visualizations are generated with Makie.jl~\cite{Makie2021}.
For all test cases we set the gravitational acceleration $g=9.81$, sediment and fluid densities $\rho_f = 1.0$ and $\rho_s = 0.3$, sediment porosity $\varphi = 0.4$ and use the sediment transport formula from Grass~\cite{grass1981sediment} with coefficient $A_g = 0.01$.
A reproducibility repository containing the necessary code and information to reproduce the numerical results in this section is available on Zenodo \cite{ersing2026exnerRepro}.

\subsection{Convergence test}
To demonstrate spatial convergence, we compare the numerical solution against an exact reference solution constructed using the method of manufactured solutions. 
The reference solution is defined using trigonometric functions to ensure sufficient smoothness for testing high-order accuracy and is given by
\begin{equation}
	h(x,t)+b(x,t) = 4 + \cos(2\sqrt{2}\pi x) \cos(2 \pi t), \quad
	v(x,t) = 0.5, \quad
	b(x,t) = 1 + \sin(2\sqrt{2} \pi x).
\end{equation}
Table~\ref{tab:convergence_results_N3} presents convergence results at final time $t=1.0$, obtained on a Cartesian mesh in the domain $[0, \sqrt{2}]$ with periodic boundary conditions, using $\mathbb{P}^3$ polynomials and a fixed time step $\Delta t = {10}^{-3}$.
The results are obtained with ES fluctuations at element interfaces, constructed from EC fluctuations in the path integral form $\statevec{D}_{EC,1}^{\pm}$ and explicit form $\statevec{D}_{EC,2}^{\pm}$ and a LLF dissipation term.
\begin{table}[b]\label{tab:convergence_results_N3}
	\caption{Convergence results obtained for $\mathbb{P}^3$ polynomials at final time $t=1$ with fixed time step $\Delta t = {10}^{-3}$.}
	\centering
	\resizebox{\columnwidth}{!}{%
		\begin{tabular}{|c|cc|cc|cc|cc|cc|cc|}
			\toprule
			& \multicolumn{6}{|c|}{$\statevec{D}_{EC,1} + \statemat{Q}_{llf}$} & \multicolumn{6}{|c|}{$\statevec{D}_{EC,2} + \statemat{Q}_{llf}$} \\\hline
			$N_{elem}$ & $L^2(h)$ & EOC(h) & $L^2(hv)$ & EOC(hv) & $L^2(b)$ & EOC(b) & $L^2(h)$ & EOC(h) & $L^2(hv)$ & EOC(hv) & $L^2(b)$ & EOC(b) \\\midrule
			$2^2$ & $3.99\cdot10^{-2}$ & $-$ & $1.47\cdot10^{-1}$ & $-$ & $5.73\cdot10^{-3}$ & $-$ & $1.76\cdot10^{-2}$ & $-$ & $3.97\cdot10^{-2}$ & $-$ & $5.73\cdot10^{-3}$ & $-$\\
			$2^3$ & $2.02\cdot10^{-3}$ & $4.31$ & $1.22\cdot10^{-2}$ & $3.59$ & $2.78\cdot10^{-4}$ & $4.37$ & $1.77\cdot10^{-3}$ & $3.31$ & $1.17\cdot10^{-2}$ & $1.77$ & $2.80\cdot10^{-4}$ & $4.36$\\
			$2^4$ & $1.72\cdot10^{-4}$ & $3.56$ & $9.58\cdot10^{-4}$ & $3.67$ & $1.78\cdot10^{-5}$ & $3.96$ & $1.11\cdot10^{-4}$ & $4.00$ & $7.23\cdot10^{-4}$ & $4.01$ & $1.78\cdot10^{-5}$ & $3.98$\\
			$2^5$ & $1.08\cdot10^{-5}$ & $3.99$ & $5.86\cdot10^{-5}$ & $4.03$ & $1.12\cdot10^{-6}$ & $3.99$ & $6.95\cdot10^{-6}$ & $3.99$ & $4.49\cdot10^{-5}$ & $4.01$ & $1.12\cdot10^{-6}$ & $3.99$\\
			$2^6$ & $6.77\cdot10^{-7}$ & $4.00$ & $3.64\cdot10^{-6}$ & $4.01$ & $7.16\cdot10^{-8}$ & $3.97$ & $4.35\cdot10^{-7}$ & $4.00$ & $2.80\cdot10^{-6}$ & $4.00$ & $7.00\cdot10^{-8}$ & $4.00$\\
			\bottomrule
	\end{tabular}}
\end{table}
The results confirm that both EC fluctuation types achieve the expected fourth-order accuracy in all quantities, confirming the theoretical results.

\subsection{Channel flow problem}
Next, we investigate the channel flow problem from \cite{hudson2001numerical}.
The test problem considers a channel on the domain $\Omega = [0, 1000]$ with initial conditions
\begin{equation}
	\begin{aligned}
		h(x,0) = 10 - b(x,0), \quad hv(x,0) = 10, \quad
		b(x,0) = \begin{cases} 
			\sin^2\left(\frac{\pi (x - 300)}{200}\right) & \text{if } \, 300 \leq x \leq 500,\\
			0 & \text{otherwise}.
		\end{cases}
	\end{aligned}
\end{equation}
An approximate closed-form solution using the method of characteristics was derived in \cite[Section~3.5.1]{hudson2001numerical} assuming rigid-lid conditions.
For the numerical solution, we discretize the domain $\Omega$ into $128$ equidistant elements, use $\mathbb{P}^4$ polynomials, and consider periodic boundary conditions to ensure entropy conservation.

\subsubsection{Entropy conservation}
First, we verify the entropy conservation properties of the scheme and compare the accuracy and performance for different EC fluctuations.
To this end, we compute the channel flow problem up to a final time $t = 30,000$.

Figure~\ref{fig:time_evolution_channel_ec} shows the time evolution of the sediment height for numerical solutions obtained with EC fluctuations $\statevec{D}_{EC,1}$ and $\statevec{D}_{EC,2}$, along with the approximate reference solution from \cite{hudson2001numerical}.
\begin{figure}
	\centering
	\includegraphics[width=0.8\textwidth, trim={0 1cm 0 1cm},clip]{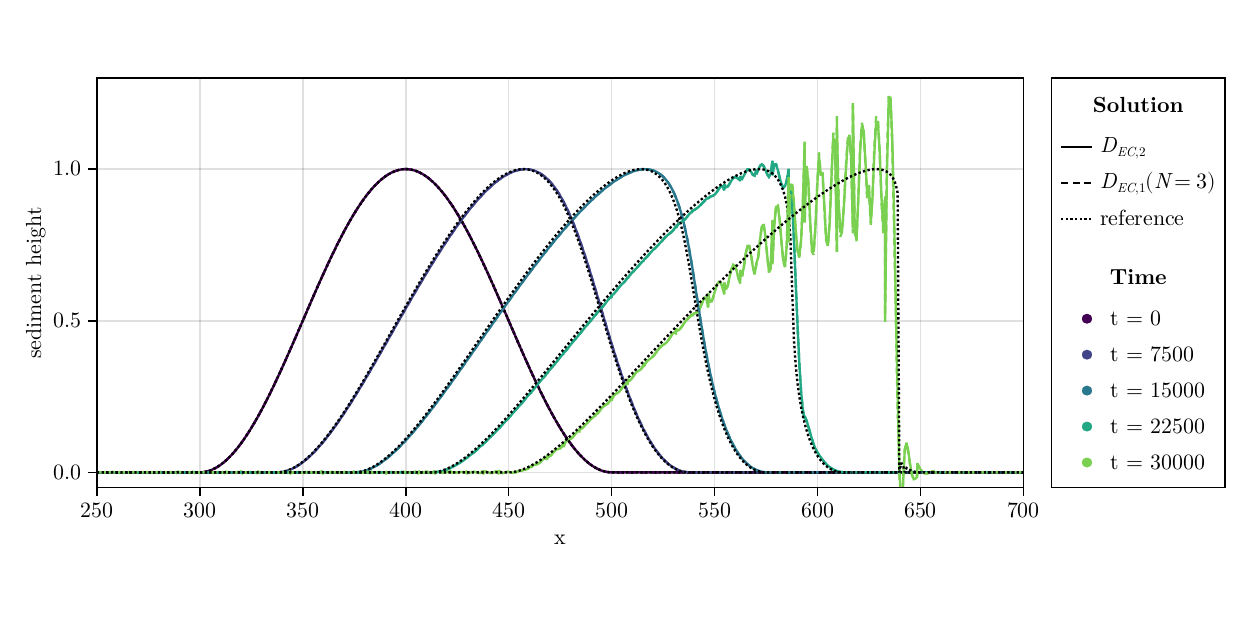}
	\caption{Time evolution of the sediment height $b$ in the channel flow test case, showing the approximate reference solution and numerical results obtained with $\mathbb{P}^4$ polynomials using EC fluctuations $\statevec{D}_{EC,1}$ and $\statevec{D}_{EC,2}$.}
	\label{fig:time_evolution_channel_ec}
\end{figure}
The figure illustrates the downstream propagation and gradual steepening of the sinusoidal sediment dune until a discontinuity forms around $t\approx23,800$.
For smooth solutions, the numerical results demonstrate a good agreement between the numerical and reference solution, with no visible difference between the two EC fluctuations.
After the onset of the discontinuity, the numerical solutions still predict the correct shock location, but exhibit spurious oscillations due to the absence of entropy dissipation.

We further compare numerical results obtained with the EC fluctuation $\statevec{D}_{EC,2}$ and EC fluctuations $\statevec{D}_{EC,1}(N)$ constructed following the strategy in Section~\ref{sec:ec_strategy_1} with a $N$-point Legendre-Gauss quadrature rule, using $N = \{1,2,3\}$ points.
Table~\ref{tab:comparison_channel_ec} reports the maximum entropy rate, the $L_2$ errors with respect to the reference solution, and the average computational time $\bar{t}_{rhs}$ required for a single evaluation of the time derivative in \eqref{eq:flux_diff_DGSEM} for each fluctuation.
\begin{table}
	\centering
	\caption{Comparison of the maximum entropy rate, convergence errors and computation time for the channel flow test case evaluated with different EC fluctuations.}
	\label{tab:comparison_channel_ec}
	\begin{tabular}{|l|c|ccc|c|}
		\toprule
		Fluctuation & $\max\limits_t \frac{1}{|\Omega|}\int S_t \, d\Omega$ & $||h - h_{ref}||_{L_2}^2$ & $||hv - hv_{ref}||_{L_2}^2$ & $||b - b_{ref}||_{L_2}^2$ & $\bar{t}_{rhs}$ \\
		\midrule
		$D_{EC,1}(N=1)$ & $1.370 \cdot 10^{-6}$  & $3.594 \cdot 10^{-2}$ & $1.571 \cdot 10^{-3}$ & $3.578 \cdot 10^{-2}$ & $306\,\mu s$ \\ 
		$D_{EC,1}(N=2)$ & $1.858 \cdot 10^{-11}$ & $3.777 \cdot 10^{-2}$ & $4.584 \cdot 10^{-3}$ & $3.739 \cdot 10^{-2}$ & $517\,\mu s$ \\
		$D_{EC,1}(N=3)$ & $3.496 \cdot 10^{-17}$ & $3.777 \cdot 10^{-2}$ & $4.584 \cdot 10^{-3}$ & $3.739 \cdot 10^{-2}$ & $738\,\mu s$ \\
		$D_{EC,2}$      & $1.682 \cdot 10^{-14}$ & $3.578 \cdot 10^{-2}$ & $7.430 \cdot 10^{-3}$ & $3.561 \cdot 10^{-2}$ & $47\,\mu s$ \\
		\bottomrule
	\end{tabular}
\end{table}

The $L_2$ errors demonstrate that for the considered test case different EC fluctuations produce nearly identical solutions with good agreement to the reference.
However, there is a large contrast in computational cost, with quadrature based fluctuations $\statevec{D}_{EC,1}(N)$ being between $~6-15$ slower than the closed-form fluctuation $\statevec{D}_{EC,2}$ for this configuration. 
The maximum entropy rate results further verify that the $\statevec{D}_{EC,2}$ fluctuation conserves entropy to machine precision in the semi-discrete case, while entropy conservation for $\statevec{D}_{EC,1}$ fluctuations depends on the accuracy of the quadrature rule used to compute the path integral. 
For $N \leq 2$ there is a noticeable violation of entropy conservation that vanishes to machine precision for $N\geq3$, highlighting that care must be taken to choose a quadrature rule of sufficient accuracy.

These results confirm that both strategies to construct EC fluctuations are demonstrably EC and yield nearly identical solutions. 
However, the closed-form fluctuation $\statevec{D}_{EC,2}$ is significantly more computationally efficient and should therefore be the preferred whenever a derivation is possible.

\subsubsection{Entropy stability}
We again, compute the channel flow test case, now employing an ES formulation using the ES fluctuations $\statevec{D}_{EC,2} + \statemat{Q}_{llf}$ and $\statevec{D}_{EC,2} + \statemat{Q}_{ES-Roe}$ at element interfaces to assess the influence of different dissipation terms and confirm the ES properties.

Figure \ref{fig:time_evolution_channel_es} compares the time evolution of the sediment height obtained with both ES fluctuations to the approximate reference solution from \cite{hudson2001numerical}.
\begin{figure}
	\centering
	\includegraphics[width=0.8\textwidth, trim={0 1cm 0 1cm},clip]{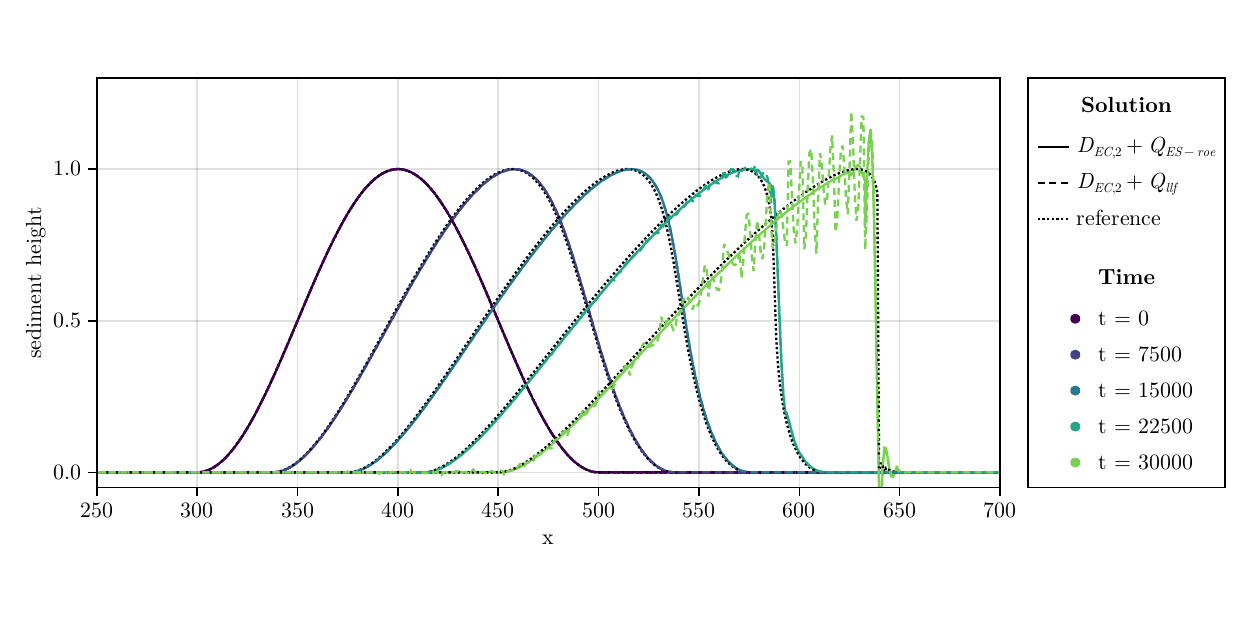}
	\caption{Time evolution of the sediment height $b$ in the channel flow test case, showing the approximate reference solution and numerical solutions results with $\mathbb{P}^4$ polynomials and ES fluctuations at interfaces.}
	\label{fig:time_evolution_channel_es}
\end{figure}
For the ES fluctuation with blended Roe-dissipation, we observe good agreement with the reference solution.
Only minor overshoots appear near the discontinuity, which is expected as we use a fifth order method without additional shock-capturing. 
While shock-capturing should be employed for practical applications, we intentionally avoid such techniques, as they dampen the underlying numerical instabilities we aim to investigate.

The numerical results obtained with LLF dissipation also agree well for smooth solutions.
However, once the discontinuity starts to form, the LLF dissipation fails to suppress the spurious oscillations that were already visible for the EC fluctuations in Figure~\ref{fig:time_evolution_channel_ec}.

To further analyze this behavior, we examine the evolution of total entropy over time for both variants of the ES fluctuations, as shown in Figure~\ref{fig:time_series_entropy}.
\begin{figure}
	\centering
	\includegraphics[width = 0.8\textwidth, trim={0 0 0 0},clip]{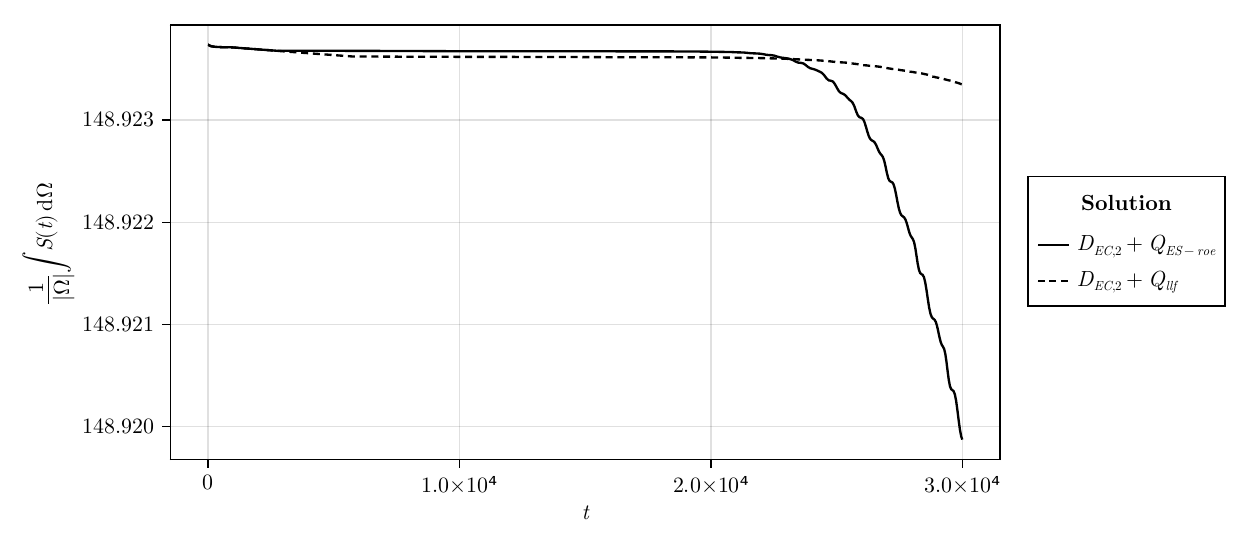}
	\caption{Total entropy over time for the channel flow test case with different ES fluctuations at interfaces.}
	\label{fig:time_series_entropy}
\end{figure}
The results demonstrate that total entropy decreases monotonically for either choice, confirming the expected entropy stability properties.
Moreover, the entropy evolution provides insight into why the LLF dissipation fails to suppress the oscillations in Figure~\ref{fig:time_evolution_channel_es}.
Once the shock forms, the blended Roe dissipation induces a significant entropy decrease, while the LLF solution exhibits only a minor decrease.
This observed behavior is rather unexpected, as the LLF dissipation is generally considered more dissipative than Roe-type dissipation.
However, the entropy evolution indicates that LLF dissipation provides insufficient dissipation near discontinuities, which causes the onset of oscillations for this test case.
The results demonstrate that the blended Roe dissipation provides more robust and accurate numerical approximations in the presence of discontinuities and should therefore be preferred over LLF dissipation in such regimes. 

\subsection{Well-balancedness}
Finally, we consider a case with a discontinuous sediment layer that is initialized with the following lake-at-rest condition
\begin{equation}
	H = 0.5, \quad v = 0, \quad b = \begin{cases}
		0.4 & \text{for} \quad |x|< 0.5 \\ 0 & \text{otherwise,}
	\end{cases}
\end{equation}
on the domain $\Omega = [-2, 2]$. This configuration is used to verify the well-balanced property of our numerical method. 

Numerical solutions are computed on a spatial discretization of $16$ equidistant elements up to a final time $t=10$ with fixed time step $\Delta t = 2\cdot10^{-2}$.
In Figure~\ref{fig:well-balanced_results}, we present results for the ES fluctuation with blended Roe dissipation and compare it to the ES fluctuation constructed with LLF dissipation that is known to violate the well-balanced property.
\begin{figure}
	\centering
	\includegraphics[width=\textwidth, trim={0 1.5cm 0 0},clip]{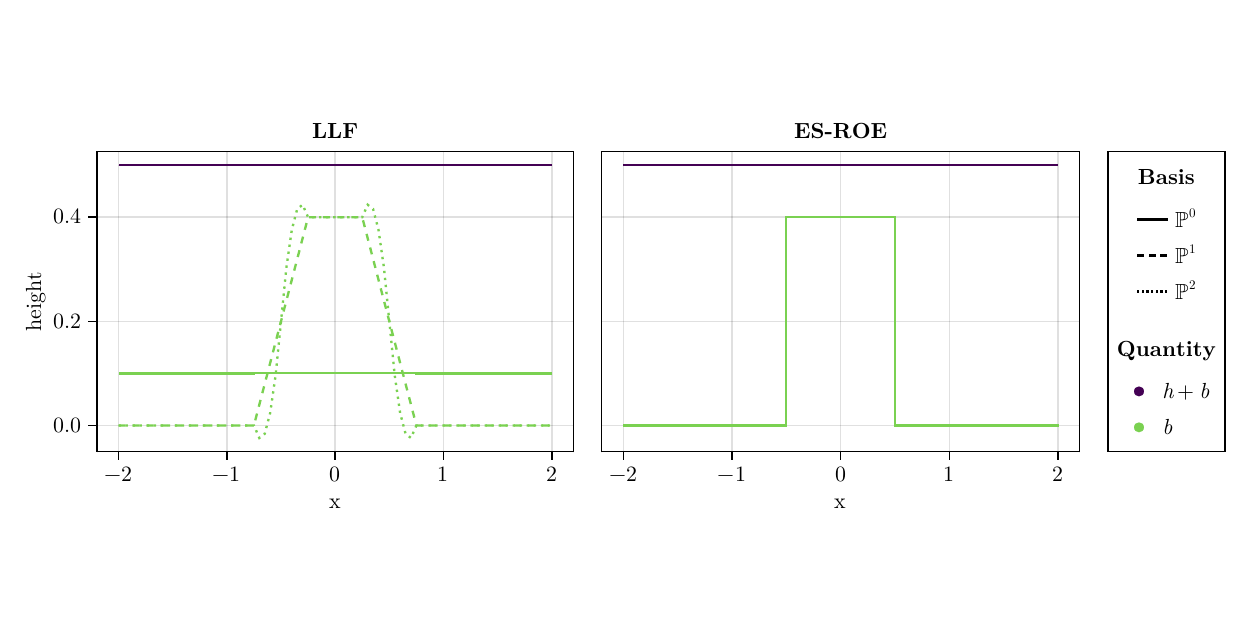}
	\caption{Water and sediment height obtained with ES fluctuations $\statevec{D}_{EC,2} + \statemat{Q}_{llf}$ (left) and $\statevec{D}_{EC,2} + \statemat{Q}_{ES-roe}$ (right) for the well-balanced test case at final time $t=10$. Results are shown for polynomial degrees $\mathbb{P}^0$, $\mathbb{P}^1$, and $\mathbb{P}^2$.}
	\label{fig:well-balanced_results}
\end{figure}

The results demonstrate that the blended Roe dissipation exactly preserves the steady-state initial condition for all polynomial degrees, matching our theoretical expectations.
In contrast, the numerical solution with LLF dissipation fails to preserve the initial condition, due to spurious dissipation in the sediment equation.
This effect is most severe for the $\mathbb{P}^0$ approximation, where the initial sediment topography is completely dissipated, converging to a different steady-state solution with constant sediment topography.
For higher-order approximations, the main features of the initial sediment topography are preserved, although the sediment is still gradually dissipated until a continuous topography in the polynomial solution space is reached.

Altogether, the results show that while the fluctuation with LLF dissipation may provide reasonable approximations for high-order polynomials, it fails to maintain the lake-at-rest equilibrium for zeroth order polynomials.
On the other hand, the blended Roe dissipation, exactly preserves the discontinuous lake-at-rest equilibrium for all polynomial degrees.

\section{Conclusion}\label{sec:conclusion}
In this work we extended the nonconservative discontinuous Galerkin spectral element method (DGSEM) from \cite{renac2019entropy} with a new very general class of entropy stable (ES) fluctuations.
We provided two strategies to construct entropy conservative (EC) fluctuations either explicitly without requiring any model-specific derivations or to construct very efficient EC fluctuations when a closed-form solution can be found.
Furthermore, we demonstrated the loss of entropy symmetrization for nonconservative systems and how this restricts the design of ES fluctuations.
As a remedy, we proposed a novel blending procedure that allows us to construct ES fluctuations from general matrix dissipation terms.
The resulting methodology was applied to develop a high-order, entropy stable and well-balanced DGSEM for the Saint-Venant-Exner system.
Finally, numerical results verified the analysis and demonstrated convergence, entropy stability and well-balancedness of the scheme.
		
\section*{CRediT authorship contribution statement}
{\bf Patrick Ersing:} Conceptualization; Formal analysis; Investigation; Methodology; Visualization; Software; Writing - original draft

{\bf Andrew R. Winters:} Conceptualization; Funding acquisition; Supervision; Software; Writing - review \& editing

\section*{Data Availability}
A reproducibility repository with necessary instructions and code to reproduce the presented results is available on Zenodo \cite{ersing2026exnerRepro}.
	
\section*{Declaration of competing interest}
	The authors declare that they have no known competing financial interests or personal relationships that could have appeared to influence the work reported in this paper.
	
\section*{Acknowledgement}	
	Funding: This work was supported by Vetenskapsrådet, Sweden [grant agreement 2020-03642 VR]	

\bibliographystyle{elsarticle-num}
\biboptions{sort&compress}	
\bibliography{bibliography.bib}

\appendix
\section{Equivalence of left and right symmetrization}\label{app:lemma4}
\begin{lemma}\label{lemma:left_right_symmetrization}
	Let $\statemat{H} \in \mathbb{R}^{n \times n}$ be symmetric positive definite (SPD) and $\statemat{A} \in \mathbb{R}^{n \times n}$. Then $\statemat{H}^{-1}$ symmetrizes $\statemat{A}$ from the left, if and only if $\statemat{H}$ symmetrizes $\statemat{A}$ from the right. 
\end{lemma}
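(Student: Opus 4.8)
The plan is to reduce both symmetrization statements to a single identity and connect them by a congruence transformation. First I would record the standing facts that follow from $\statemat{H}$ being SPD: it is symmetric and invertible, and its inverse $\statemat{H}^{-1}$ is likewise symmetric (indeed SPD). I then fix the precise meaning of the two claims: ``$\statemat{H}^{-1}$ symmetrizes $\statemat{A}$ from the left'' means that the matrix $\statemat{H}^{-1}\statemat{A}$ is symmetric, while ``$\statemat{H}$ symmetrizes $\statemat{A}$ from the right'' means that $\statemat{A}\statemat{H}$ is symmetric.

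The key observation is that conjugation by the symmetric matrix $\statemat{H}$, i.e.\ the congruence map $\statemat{M} \mapsto \statemat{H}\statemat{M}\statemat{H}$, preserves symmetry: if $\statemat{M} = \statemat{M}^T$ then $(\statemat{H}\statemat{M}\statemat{H})^T = \statemat{H}^T\statemat{M}^T\statemat{H}^T = \statemat{H}\statemat{M}\statemat{H}$, where I used $\statemat{H}^T = \statemat{H}$. Applying this map to $\statemat{M} = \statemat{H}^{-1}\statemat{A}$ and cancelling $\statemat{H}\statemat{H}^{-1} = \statemat{I}$ gives $\statemat{H}(\statemat{H}^{-1}\statemat{A})\statemat{H} = \statemat{A}\statemat{H}$. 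Hence if $\statemat{H}^{-1}\statemat{A}$ is symmetric, so is $\statemat{A}\statemat{H}$, which establishes the forward implication.

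For the converse I would run the same argument with the inverse congruence $\statemat{M} \mapsto \statemat{H}^{-1}\statemat{M}\statemat{H}^{-1}$, which also preserves symmetry because $\statemat{H}^{-1}$ is symmetric. Applied to $\statemat{M} = \statemat{A}\statemat{H}$ it returns $\statemat{H}^{-1}(\statemat{A}\statemat{H})\statemat{H}^{-1} = \statemat{H}^{-1}\statemat{A}$, so symmetry of $\statemat{A}\statemat{H}$ forces symmetry of $\statemat{H}^{-1}\statemat{A}$. Since the two maps are mutually inverse, the equivalence follows.

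There is no substantive obstacle here; the entire content is the fact that a congruence by a symmetric matrix preserves symmetry, together with the cancellation $\statemat{H}\statemat{H}^{-1} = \statemat{I}$. The only point requiring a little care is to invoke the symmetry of $\statemat{H}$ (and of $\statemat{H}^{-1}$) rather than merely its invertibility, since it is precisely $\statemat{H}^T = \statemat{H}$ that makes the transformation symmetry-preserving: invertibility alone would yield only a similarity transformation, which does not transport symmetry.
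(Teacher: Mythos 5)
Your proposal is correct and is essentially the paper's own argument in slightly different clothing: the congruence $\statemat{M} \mapsto \statemat{H}\statemat{M}\statemat{H}$ (and its inverse) is exactly the paper's ``multiplication with $\statemat{H}$ from both left and right,'' with the same cancellation $\statemat{H}(\statemat{H}^{-1}\statemat{A})\statemat{H} = \statemat{A}\,\statemat{H}$ and the same reliance on $\statemat{H}^T = \statemat{H}$ to transport symmetry. Your closing remark that symmetry of $\statemat{H}$, not mere invertibility, is the operative hypothesis is a nice observation but does not change the substance.
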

\begin{proof}
	Since $\statemat{H}$ is SPD it follows that $\statemat{H}^{-1}$ exists and is also SPD. Then first assume that $\statemat{H}^{-1}$ symmetrizes $\statemat{A}$ from the left
	\begin{equation}
		\statemat{H}^{-1}\statemat{A} = (\statemat{H}^{-1}\statemat{A})^T = \statemat{A}^T\statemat{H}^{-1},
	\end{equation}
	then multiplication with $\statemat{H}$ from both left and right shows that $\statemat{H}$ is a right symmetrizer for $\statemat{A}$
	\begin{equation}
		\statemat{A}\,\statemat{H} = (\statemat{A}\,\statemat{H})^T = \statemat{H}\statemat{A}^T.
	\end{equation}
	To show the converse, assume $\statemat{H}$ symmetrizes $\statemat{A}$ from the right. Then multiplication with $\statemat{H}^{-1}$ from both left and right shows that $\statemat{H}$ is a left symmetrizer for $\statemat{A}$ which shows that both statements are equivalent.
	\qed
\end{proof}

\end{document}